\newcommand{\Cc}{\mathcal{C}}
\newcommand{\Mcc}{\mathcal{M}}
\newcommand{\Hr}{\mathscr{H}}
\newcommand{\ess}{{\rm{ess}}}
\newcommand{\N}{\mathbb{N}}
\newcommand{\R}{\mathbb{R}}
\newcommand{\C}{\mathbb{C}}
\newcommand{\D}{\mathbb{D}}
\newcommand{\T}{\mathbb{T}}
\newcommand{\ds}{\displaystyle}
\newcommand{\si}{\sigma}
\newcommand{\al}{\alpha}
\newcommand{\la}{\lambda}
\newcommand{\ap}{\approx}
\newcommand{\Hc}{\mathcal{H}}
\newcommand{\Lc}{\mathcal{L}}
\newcommand{\Sg}{\textbf{S}}
\newcommand{\Id}{{\rm{Id}}}
\newcommand{\im}{{\rm{Im} }}
\newcommand{\bsl}{\backslash}
\newtheorem{theorem}{Theorem}[section]
\newtheorem{proposition}[theorem]{Proposition}
\newtheorem{lemma}[theorem]{Lemma}
\numberwithin{equation}{section}
\begin{document}

\title[Eigenvalues of a Dirac operator]{On quantitative bounds on eigenvalues of a complex perturbation of a Dirac operator}

\author{Cl\'{e}ment Dubuisson}
\address{Institut de Mathematiques de Bordeaux
Universite Bordeaux 1
351, cours de la Lib\'eration
F-33405 Talence cedex}
\email{clement.dubuisson@math.u-bordeaux1.fr, dubuissonc@gmail.com}

\begin{abstract}
We prove a Lieb-Thirring type inequality for
a complex perturbation of a $d$-dimensional massive Dirac
operator $D_m, m\geq 0, d \geq 1$ whose spectrum is
$]-\infty,-m]\cup[m,+\infty[$. The difficulty of the study is that the
unperturbed operator is not bounded from below in this case, and, to
overcome it, we use the methods of complex function 
theory. The methods of the article also give similar results for complex perturbations of  the Klein-Gordon operator.
\end{abstract}

\subjclass{30C35, 35Q41, 47A10, 47B10, 47B25, 81Q10}
\keywords{Dirac operator, complex perturbation, discrete spectrum, Lieb-Thirring type inequality, conformal mapping, perturbation determinant}

\maketitle


\section{Introduction}

In the Dirac formalism (e.g., \cite[section 1]{Th}) the
properties of a relativistic particles with spin-$1/2$ (for instance
electrons in the massive case and neutrinos in the non-massive case)
are described with the help of the Dirac operator. Because of spin
structure, the configuration space of the particle takes values in
$\C^n$, where $n=2^\nu$ with $\nu \geq 1$. The movement of the free
particle of mass $m$ is given by the Dirac equation,
\begin{equation*}
\mathrm{i} \hbar \frac{\partial \varphi}{\partial t }= D_m \varphi, 
\end{equation*}
where $\varphi\in L^2(\R^{d}; \C^{n})$
with $d\in \{1,\ldots,n-1\}$, if $m>0$ and 
$d\in \{1,\ldots,n\}$ otherwise.
The Dirac operator is defined as 
\begin{equation}\label{def:Dm}
D_m:=-\mathrm{i} c \hbar\, \alpha\cdot \nabla +mc^2\beta=
-\mathrm{i} c\hbar\sum_{k=1}^d\alpha_k \dfrac{\partial }{\partial x_k} + mc^2\beta. 
\end{equation}
Here $c$ is the speed of light, and $\hbar$ is the reduced Planck
constant. We renormalize and consider $\hbar=c=1$. 
Here we set ${\alpha}:=\left(\alpha_1, \ldots, \alpha_d \right)$
and $\beta:=\alpha_{d+1}$. The matrices $\alpha_i$ are $d+1$ linearly
independent self-adjoint linear maps, acting in $\C^{n}$, satisfying
the following anti-commutation relations
\begin{equation*}
\alpha_i\alpha_j+\alpha_j\alpha_i=2\delta_{i,j}\mathrm{Id}\,,
\end{equation*}
where $i,j=1,\dots, d+1.$
For instance, on $\R^3$, one can choose the Pauli-Dirac representation 
\begin{gather*}
\alpha_i=\left( \begin{array}{cc}
0 & \si_i \\
\si_i & 0
\end{array} \right),
\quad
\beta=\left(\begin{array}{cc}
{\rm{Id}}_{\C^2} & 0 \\
0 & -{\rm{Id}}_{\C^2}
\end{array} \right),
\end{gather*}
where $i=1,2, 3,$ and
\begin{gather*}
\sigma_1=\left(
\begin{array}{cc}
0 & 1 \\
1 & 0
\end{array} \right),\quad
\quad\sigma_2=\left(
\begin{array}{cc}
0 & -\mathrm{i} \\
\mathrm{i} & 0
\end{array} \right),\quad
\quad\sigma_3=\left(
\begin{array}{cc}
1 & 0 \\
0 & -1
\end{array} \right).
\end{gather*}
In the general case, the $n\times n$-matrices $\al_j$ are constructed
as special elements of the so-called Clifford algebra (see 
\cite[Chapter 1]{Ob}).
Without any loss of generality we take
\[\beta:=\left(\begin{array}{cc}
{\rm{Id}_{\C^{n/2}}} & 0\\
0 & -{\rm{Id}_{\C^{n/2}}}
\end{array}\right).\]
Mimicking the proofs of section 1.1 to section 1.4 of \cite[section 1]{Th}
it is easy to check that 
the operator $D_m$ is essentially self-adjoint on $\Cc^\infty_c(\R^d;
\C^{n})$ and the domain of its closure is $\Hr^1(\R^d; \C^{n})$,
the Sobolev space of order $1$ with values in $\C^{n}$. The closure of
the operator is denoted with the same symbol $D_m$. With the help of
the Fourier transform, it is easy to prove that $D_m$ is
unitarily equivalent to 
\begin{equation}\label{e:equi}
\left(\begin{array}{cc}
\sqrt{-\Delta_{\R^d}+m^2} \times {\rm{Id}_{\C^{n/2}}} & 0\\
0 & -\sqrt{-\Delta_{\R^d}+m^2}\times {\rm{Id}_{\C^{n/2}}}
\end{array}\right).
\end{equation}
Therefore the spectrum of $D_m$ is purely absolutely continuous and is
given by ${]-\infty, -m]\cup[m,+\infty[}$. 

Another object of interest for us is the following operator
given by
\begin{equation}\label{def:KG}
K_m= \sqrt{-\Delta_{\R^d}+m^2}\times {\rm{Id}_{\C^{l}}}
\end{equation}
with $m\geq 0$ and $l\geq 1$.
We call it \emph{Klein-Gordon operator} but there are other possible conventions for its name.
This time, the index $l$ is not related to $d$.
It is well known that it describes a massive relativistic particle without spin;
naturally enough, this is just ``a half" of the Dirac operator 
{in the view of \eqref{e:equi}}.
One can readily see that, as above, it is essentially self-adjoint on $\Cc^\infty_c(\R^d; \C^{l})$, the domain of its closure is $\Hr^1(\R^d; \C^{l})$. The closure of the operator being denoted by the same symbol, its spectrum is absolutely continuous and equals $[m,+\infty[$.

The purpose of this article is to obtain a Lieb-Thirring type
inequality for the discrete spectrum of a complex perturbation of
\eqref{def:Dm} and \eqref{def:KG}.
We actually concentrate on the Dirac operator,
and the case of Klein-Gordon operator will follow easily from the obtained results.
We would like to mention that the problems of this kind
(for perturbations of various self-adjoint operators)
were rather intensively studied over the last years.
We refer to papers by Frank, Laptev, Lieb, and Seiringer (\cite{FrLaLiSe}),
Bruneau and Ouhabaz (\cite{BrOu}),
Borichev, Golinskii, and Kupin (\cite{BoGoKu}),
Demuth, Hansmann, and Katriel (\cite{DeHaKa, DeHaKa1, DeHaKa2}),
Golinskii and Kupin (\cite{GoKu1, GoKu2}),
Hansmann (\cite{Ha1, Ha2}),
and Hansmann and Katriel (\cite{HaKa}).
An appropriate modification of some methods of the above papers was applied by Sambou (\cite{Sa})
to the study of a complex perturbation of a magnetic Schr\"odinger operator.
Moreover, an interesting recent paper by Cuenin, Laptev, and Tretter
(\cite{CuLaTr}) studies not only the distribution,
but also the localization of the discrete spectrum of a complex perturbation of
one-dimensional Dirac operator $D_m, \ m\geq 0$. 

The results of papers \cite{FrLaLiSe} and \cite{GoKu} were obtained by reducing the case of a complex perturbation of a given operator to a self-adjoint situation;
by the way, the paper \cite{FrLaLiSe} also contains the discussion of the properties of complex perturbations of self-adjoint operators and exhaustive list of references on it.
For several reasons detailed below, the approach of the present article is different.
It is rather close to \cite{BoGoKu} and \cite{DeHaKa} and it is based on complex function theory. 
First, as explained in \cite{FrLaLiSe} and \cite{GoKu},
the methods of the theory of self-adjoint operators lead to the results on the part of the discrete spectrum of the perturbed operator in certain angular domains,
and here we are interested in distributional characteristics of the whole discrete spectrum.
Second, if one wants to follow the approach of \cite{FrLaLiSe} (or \cite{GoKu}),
then one generically should consider a part of the discrete spectrum in a domain touching the lower (or the upper) edge of the essential spectrum of the operator
and then to apply a version of Birman-Schwinger principle (\cite{Bi, Sch})
together with Fan-Mirski lemma (\cite[chapter III]{Bha})
to get some information on it.
Since the operators we consider are not semi-bounded, the above techniques do not apply immediately.
Of course, one may think about an appropriate decoupling and a use of a generalized version of Birman-Schwinger principle in the spirit of nice recent work by Frank and Simon (\cite{FrSi}).
The authors of this paper treat the case of a Dirac operator on $\R$, which is not enough for our purposes.
Probably, a development of methods of \cite{FrSi} would allow one to advance on Lieb-Thirring type inequalities for multi-dimensional Dirac operators,
but we do not pursue this direction here.

To formulate our results, we introduce some notation.
For a (possibly unbounded) operator $A$ on a separable Hilbert space {$\Hc$},
we denote the spectrum, the essential and the discrete spectrum of $A$ by
$\si(A), \si_{\ess}(A),$ and $\si_d(A)$, respectively.
Here the discrete spectrum is the set of all eigenvalues which are discrete points of the spectrum whose corresponding eigenspaces (or rootspaces) are finite dimensional.
The essential spectrum is then the complement of the discrete spectrum in the spectrum of $A$. For more details, see \cite[subsection VII.3]{ReSi1} or \cite[p.5]{DeHaKa2}.
We put $\Sg_p,p\geq 1$ to be the \emph{Schatten-von Neumann} class of compact operators,
see section \ref{s21} for the definitions and discussion of the object.

Let $\Mcc_{n,n}(\C)$ denote the space of $n\times n$ complex-valued matrices.
For $p\geq 1$, consider the space of $\Mcc_{n,n}(\C)$-valued measurable functions on $\R^d$ defined as
\begin{align}
L^p(\R^d; \Mcc_{n,n}(\C))&=\left\{V  :  \|V\|_{L^p}^p=\int_{\R^d} \|V(x)\|^p_F\, dx\right\},
\end{align}
where $\|\cdot\|_F$ is the \emph{Frobenius norm},
\begin{equation}\label{def:frob}
\|V(x)\|_F=\left(\sum_{i,j=1,\ldots,  n}|(V(x))_{i,j}|^2\right)^{1/2}.
\end{equation}

The function $V$ is often identified with the operator of multiplication by itself.
Assuming that $V\in L^p(\R^d; \Mcc_{n,n}(\C))$ and $p>d$,
we prove (see Proposition \ref{det-RS}) that
the multiplication by $V$ is \emph{relatively Schatten-von Neumann perturbation} of $D_m$,
\textit{i.e.}, dom($D_m$) $\subset$ dom($V$), and
\begin{equation}\label{hyp}
 V(\la-D_m)^{-1} \in \Sg_p,
\end{equation}
for one $\la \in
\C\backslash\si(D_m)$ (and hence for all these $\la$'s).
Consider the perturbed operator
\begin{equation} \label{hyp1}
D=D_m+V. 
\end{equation}
Recall that by Weyl's theorem on essential spectrum (\cite[Theorem XIII.14]{ReSi4},
or equivalently \cite[Theorem IX.2.1]{EdEv})
\begin{equation*}
\si_{\ess}(D)=\si_{\ess}(D_m)=\si(D_m)=]-\infty,-m]\cup [m,+\infty[.
\end{equation*}

Our main results are the following theorems.

\begin{theorem}[case $m>0$]\label{t1}
Let $D$ be the Dirac operator defined in \eqref{hyp1} and ${V \in
L^p(\R^d; \Mcc_{n,n}(\C)), \ p>d}$. Then its discrete spectrum $\si_d(D)$ admits the
following Lieb-Thirring type bound: for all $0<\tau <\min \{p-d,
1\}$, 
\begin{equation}\label{eq01}
 \ds\sum_{\la \in \si_d(D)} \dfrac{d(\la,\si(D_m))^{p+\tau}}
{|\la-m|\cdot |\la+m|(1+|\la|)^{2p-2+2\tau}}
\leq C \|V\|^p_{L^p},
\end{equation}
except when $d=1$ and $1<p<2$ in which case the Lieb-Thirring type bound becomes
\begin{equation}\label{eq02}
 \ds\sum_{\la \in \si_d(D)} \dfrac{d(\la,\si(D_m))^{p+\tau}}
{(|\la-m|\cdot|\la+m|)^{(p+\tau)/2}(1+|\la|)^{p+\tau}}
\leq C \|V\|^p_{L^p},
\end{equation}
where the constants $C$ depends on $n, d, p, m$, and $\tau$.
\end{theorem}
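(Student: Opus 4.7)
The overall strategy follows the complex-function-theoretic paradigm of \cite{BoGoKu, DeHaKa}: encode the discrete spectrum of $D$ as the zero set of a holomorphic perturbation determinant on $\C\setminus\si(D_m)$, bound this determinant in terms of $\|V\|_{L^p}$, uniformize the slit domain $\C\setminus\si(D_m)$ conformally onto the unit disk, and finally apply a Blaschke-type theorem for holomorphic functions with controlled boundary blow-up to read off the weighted sums \eqref{eq01}--\eqref{eq02}.

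In more detail, Proposition~\ref{det-RS} guarantees $V(\la-D_m)^{-1}\in\Sg_p$, so the regularized determinant
\[
g(\la) \;=\; \det_{\lceil p\rceil}\bigl(I-V(\la-D_m)^{-1}\bigr), \qquad \la\in\C\setminus\si(D_m),
\]
is well defined, holomorphic off $\si(D_m)$, and vanishes precisely (with the correct algebraic multiplicities) at the points of $\si_d(D)$. The standard determinantal inequality gives
\[
\log|g(\la)| \;\le\; C_p \,\|V(\la-D_m)^{-1}\|_{\Sg_p}^{\,p},
\]
so the core analytic task is to bound the $\Sg_p$-norm by $\|V\|_{L^p}$ times an explicit function of $\la$. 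By \eqref{e:equi}, $(D_m-\la)^{-1}$ is a $2\times 2$ block resolvent built from $(\sqrt{-\Delta+m^2}\mp\la)^{-1}$, whose symbols can be controlled pointwise in $\la$. Combining this with Kato--Seiler--Simon inequalities for operators $f(x)g(-\mathrm{i}\nabla)\in\Sg_p$, one aims for
\[
\|V(\la-D_m)^{-1}\|_{\Sg_p} \;\le\; C\,\|V\|_{L^p}\,\Psi(\la),
\]
where $\Psi(\la)$ captures the singular behavior of the resolvent near $\pm m$, along the rest of $\si(D_m)$, and at infinity.

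I would then uniformize $\C\setminus\si(D_m)$ conformally onto $\D$ in two steps: first apply $\la\mapsto\sqrt{\la^2-m^2}$, branched so as to send $\mathrm{i}\R_+$ to $\mathrm{i}\R_+$, mapping the doubly-slit plane to a half-plane; then compose with a M\"obius transformation onto $\D$. Under this map $\phi$, the boundary $\si(D_m)$ goes to $\T=\p\D$ and the three distinguished points $\pm m,\infty$ become marked points of $\T$ where $\Psi\circ\phi^{-1}$ carries integrable singularities. The Borichev--Golinskii--Kupin theorem from \cite{BoGoKu} for holomorphic functions on $\D$ with prescribed boundary blow-up converts the resulting estimate $\log|g\circ\phi^{-1}(w)|\le C\,\|V\|_{L^p}^{\,p}\,(\Psi\circ\phi^{-1})(w)^{p}$ into a weighted Blaschke sum over its zeros in $\D$, with an auxiliary slack parameter $\tau\in(0,\min\{p-d,1\})$ absorbed to guarantee convergence.

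Finally, transferring the disk-side inequality back through $\phi^{-1}$ replaces $1-|w|$ by $d(\la,\si(D_m))$ modulated by the Jacobian of $\phi$, while the local coordinates near $\phi(\pm m)$ and $\phi(\infty)$ unfold into the factors $|\la-m|,|\la+m|$ and $1+|\la|$ appearing in the denominators of \eqref{eq01}--\eqref{eq02}. The hardest ingredient, in my view, is the Schatten estimate of the second step: one needs $\Psi(\la)$ to be sharp simultaneously near the two edges, along the interior of $\si(D_m)$, and at infinity for the Blaschke condition to be non-trivial, and one must check that the blow-up of $\Psi\circ\phi^{-1}$ near $\T$ fits into the framework of \cite{BoGoKu}. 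The exceptional case $d=1,\ 1<p<2$ in \eqref{eq02} is precisely the regime where the classical Kato--Seiler--Simon inequality is not directly available, forcing a more symmetric factorization of the resolvent that produces different exponents on the edge weights.
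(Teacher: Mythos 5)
Your overall strategy---perturbation determinant, Schatten bound, conformal uniformization, Borichev--Golinskii--Kupin---is exactly the architecture the paper uses, but two concrete points go wrong.

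First, the choice of determinant. You propose $g(\la)=\det_{\lceil p\rceil}\bigl(I-V(\la-D_m)^{-1}\bigr)$. This is holomorphic on $\rho(D_m)$ and its zeros encode $\si_d(D)$, but it has no built-in normalization: at the base point $\mathrm{i}b$ that the conformal map $\varphi$ sends to $0\in\D$, one has $g(\mathrm{i}b)\ne 1$ in general, and you have no lower bound on $|g(\mathrm{i}b)|$. Theorem~\ref{BGK} requires $h(0)=1$, so the estimate you write down cannot be fed to it directly, and dividing by $g(\mathrm{i}b)$ introduces an uncontrolled $V$-dependent constant. This is precisely why the paper works with the modified operator $F(\la)=(\la-\mathrm{i}b)(-\mathrm{i}b+D)^{-1}V(\la-D_m)^{-1}$ from \cite{DeHaKa}: the extra scalar factor $(\la-\mathrm{i}b)$ forces $F(\mathrm{i}b)=0$, hence $f(\mathrm{i}b)=1$ automatically, and the inserted resolvent $(-\mathrm{i}b+D)^{-1}$ (whose norm is tamed by Lemma~\ref{lem}) keeps the determinant's zero set equal to $\si_d(D)$. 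Without this modification the normalization step of your argument is missing.

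Second, the explanation of the exceptional regime $d=1$, $1<p<2$ is off. The Kato--Seiler--Simon-type bound (Proposition~\ref{det-RS}) is proved in the paper for \emph{all} $p\ge 1$ (by interpolation for $p\ge 2$ and then by duality for $1\le p\le 2$); nothing there fails for small $p$. The dichotomy between \eqref{eq01} and \eqref{eq02} comes from Theorem~\ref{BGK}: after mapping to $\D$ the edge points $\pm m$ contribute a factor $|u-u_{m,\pm}|^{(p-2+\tau)_+}$, and this exponent vanishes exactly when $p-2+\tau<0$, i.e.\ $1<p<2$ and $\tau$ small. Since $p>d$, this is only possible for $d=1$. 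So the two forms of the inequality are a by-product of the clipping operation $(\cdot)_+$ in the Blaschke theorem, not of the Schatten estimate.

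Apart from these, the rest of the sketch---the block-diagonal reduction via \eqref{e:equi}, the pointwise bound on the resolvent symbol, the four-step conformal map to the disk, and the Koebe-distortion bookkeeping that converts $1-|u|$ and the marked-point factors into $d(\la,\si(D_m))$, $|\la\pm m|$, and $1+|\la|$---matches the paper's route faithfully.
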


It seems appropriate to compare this claim to results of Cancelier, L\'evy-Bruhl and Nourrigat (\cite{CaLeNo}) and Frank and Simon (\cite{FrSi}).
These papers are devoted to the case of a self-adjoint perturbation,
where relation \eqref{eq01} can be rewritten in a simpler form.
Indeed, in this case the discrete spectrum $\si_d(D)$ lies in $]-m, m[$.
Defining $E_m=\{\pm m\}$, we see that $d(\la, \si(D_m))=d(\la, E_m)$ for $\la\in \si_d(D)$,
and an easy computation shows that \eqref{eq01} reads as
\begin{equation}\label{eq011}
\ds\sum_{\la \in \si_d(D)} d(\la, E_m)^{p-1+\tau}
\leq C \|V\|^p_{L^p},
\end{equation}
and \eqref{eq02} as
\begin{equation}\label{eq021}
\ds\sum_{\la \in \si_d(D)} d(\la, E_m)^{(p+\tau)/2}
\leq C \|V\|^p_{L^p}.
\end{equation}
In \cite{CaLeNo}, $d=3$ and, with our notation, one of the central results of the paper
(see \cite[corollary 1.3]{CaLeNo}) says
\begin{equation}\label{i:CaLeNo}
\ds\sum_{\la \in \si_d(D)} d(\la, E_m)^{p-3}
\leq C_p  \big( \|V\|^p_{L^p}+ \|V\|^{p-3/2}_{L^{p-3/2}}\big),
\end{equation}
where $p>3$ and $V\in L^p(\R^3; \R)\cap L^{p-3/2}(\R^3; \R)$.

In \cite{FrSi}, $d=1$ and one has (see \cite[Theorem 7.1]{FrSi})
\begin{equation}\label{i:FrSi}
\ds\sum_{\la \in \si_d(D)} d(\la, E_m)^{p-1}
\leq C_{1,p}  \|V\|^p_{L^p} + C_{2,p,m} \|V\|^{p-1/2}_{L^{p-1/2}},
\end{equation}
where $p\geq 3/2$ and $V\in L^{p}(\R; \R)\cap L^{p-1/2}(\R; \R)$.
Clearly enough, \eqref{i:CaLeNo} and \eqref{i:FrSi} are stronger than \eqref{eq011} (and \eqref{eq021});
the gap between \eqref{i:FrSi} and \eqref{eq011} seems to be smaller than the one between \eqref{i:CaLeNo} and \eqref{eq011}.
On the other hand, even for real-valued case, bound \eqref{eq011} is valid for larger classes of potentials.
The point is that its proper rewriting \eqref{eq01} remains true even for complex-valued perturbations.
As often happens, the strength of the method we use is indivisible from its weakness, \textit{i.e.}, being very general and rather powerful,
it does not go ultimately far in exploiting the specifics of operators under consideration.
Hence the bounds on the discrete spectrum it produces are expected to be improvable at least in some special cases.

The version of Theorem \ref{t1} for $m=0$ is as follows.
\begin{theorem}[case $m=0$]\label{t2}
Let $D$ be the Dirac operator defined in \eqref{hyp1} with $m=0$ and $V\in L^p(\R^d; \Mcc_{n,n}(\C)), \ p>d$.
Then 
\begin{equation} \label{eq03}
\ds\sum_{\la \in \si_d(D)}
\dfrac{d(\la,\si(D_0))^{p+\tau}}{(1+|\la|)^{2(p+\tau)}}
\leq C\|V\|_{L^p}^p,
\end{equation}
where $0<\tau<\min \{p-d, 1\}$ and $C$ depends on $n, d, p, m$, and $\tau$.
\end{theorem}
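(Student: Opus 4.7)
The plan is to follow the complex-analytic scheme developed in \cite{BoGoKu} and used for Theorem \ref{t1}, now simplified by the fact that $\si(D_0)=\R$ has no endpoint in $\C$. Since $\C\setminus\R$ has two connected components $\demi^\pm$, I would treat each separately via a Cayley transform $\varphi(\la)=(\la-\mathrm{i})/(\la+\mathrm{i})$ (and its conjugate for the lower half-plane), both sending their domain onto $\D$ and the single ``singular'' boundary point $\infty$ to $1\in\T$.

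First, by Proposition \ref{det-RS} (specialized to $m=0$), one has $V(D_0-\la)^{-1}\in\Sg_p$ on $\C\setminus\R$, so I would form the regularized perturbation determinant
\begin{equation*}
h(\la)=\det_{\lceil p\rceil}\bigl(I+V(D_0-\la)^{-1}\bigr),
\end{equation*}
which is analytic on $\C\setminus\R$, vanishes exactly on $\si_d(D)$ with correct multiplicities, and tends to $1$ at $\infty$. Combining $|\det_{\lceil p\rceil}(I+A)|\le\exp(c_p\|A\|_{\Sg_p}^p)$ with the resolvent identity $(D_0-\la)^{-1}=(D_0+\la)(-\Delta-\la^2)^{-1}$ and a Kato-Seiler-Simon type Schatten-norm estimate (using the unitary equivalence \eqref{e:equi} to reduce to resolvents of $\pm\sqrt{-\Delta}$), one obtains
\begin{equation*}
\log|h(\la)|\le C\|V\|_{L^p}^p\,\frac{(1+|\la|)^{A}}{|\im\la|^{B}}
\end{equation*}
for suitable explicit exponents, which I aim to calibrate to $B=p-1$ and $A\le2p-1$.

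Second, I would pull back $h$ via $\mu=\varphi(\la)$, using the elementary identities $(1-|\mu|)\asymp|\im\la|/(1+|\la|)^2$ and $|1-\mu|\asymp1/(1+|\la|)$, turning the previous inequality into
\begin{equation*}
\log|\tilde h(\mu)|\le\frac{C\|V\|_{L^p}^p}{(1-|\mu|)^{B}\,|1-\mu|^{A-2B}},\qquad\mu\in\D.
\end{equation*}
After a standard reduction (as in \cite{BoGoKu}) from the natural normalization $\tilde h(1)=1$ to an interior normalization at $0$, I would apply a Blaschke-type zero-counting theorem of Borichev-Golinskii-Kupin: for $f$ analytic in $\D$ with $f(0)=1$ and $\log|f(z)|\le K(1-|z|)^{-a}|1-z|^{-b}$, the zero set $\{\mu_k\}$ satisfies $\sum_k(1-|\mu_k|)^{a+1+\tau}|1-\mu_k|^{(b-1+\tau)_+}\le C_\tau K$ for every $\tau>0$. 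With $a=B=p-1$ and $A\le2p-1$ this yields $\sum_k(1-|\mu_k|)^{p+\tau}\le C\|V\|_{L^p}^p$, which converts back through $(1-|\mu_k|)^{p+\tau}\asymp(\im\la_k)^{p+\tau}/(1+|\la_k|)^{2(p+\tau)}$ to the bound \eqref{eq03} on $\demi^+$; summation with the parallel estimate on $\demi^-$ finishes the argument.

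The main obstacle is the correct calibration of $A$ and $B$ in the Schatten estimate. It must be compatible with the hypotheses $p>d$ and $\tau<\min\{p-d,1\}$ and still produce the clean weight in \eqref{eq03}; this forces a careful interpolation between a Hilbert-Schmidt bound near $p=d$ and the Kato-Seiler-Simon inequality for large $p$, together with tracking of the $\lceil p\rceil$-regularization which absorbs the leading diagonal of the determinant expansion into the $\Sg_p$-norm raised to the power $p$. A welcome simplification relative to Theorem \ref{t1} is that there are no spectral edges $\pm m$ to track: the only boundary singularity in $\D$ is the point $\mu=1$ (image of $\infty$), so the BoGoKu lemma is invoked with a single boundary weight and no edge factors $|\la\pm m|$ appear in the final inequality.
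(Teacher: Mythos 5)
Your plan follows the right general route (Cayley transform to $\D$, Schatten-norm bound on the Birman--Schwinger factor, growth estimate on a regularized determinant, Borichev--Golinskii--Kupin), but the treatment of the normalization is a genuine gap, not a ``standard reduction''. The determinant you set up, $h(\la)=\det_{\lceil p\rceil}\bigl(I+V(D_0-\la)^{-1}\bigr)$, equals $1$ only in the limit $\la\to\infty$, which after the Cayley transform is the \emph{boundary} point $\mu=1$ of $\D$. Theorem~\ref{BGK} requires the value $1$ at the \emph{interior} point $\mu=0$. Passing from one to the other by dividing by $\tilde h(0)=h(\mathrm{i}b)$ would require a lower bound on $|h(\mathrm{i}b)|$ controlled by $\|V\|_{L^p}^p$, and no such bound is available a priori: $\log|h(\mathrm{i}b)|$ could be a large negative number without any quantitative link to $\|V\|_{L^p}$, ruining the constant in the final inequality. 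The paper circumvents this precisely by using the modified determinant $f(\la)=\det_{\lceil p\rceil}(\Id-F(\la))$ with $F(\la)=(\la-\mathrm{i}b)(-\mathrm{i}b+D)^{-1}V(\la-D_0)^{-1}$, defined in \eqref{def:F}--\eqref{def:f} following \cite{DeHaKa}: by construction $F(\mathrm{i}b)=0$, hence $f(\mathrm{i}b)=1$ exactly, and the map of Section~\ref{massnull} sends $\mathrm{i}b$ to $u=0$. This requires $(-\mathrm{i}b+D)$ to be invertible (Lemma~\ref{lem}, with $b$ chosen large enough), and the same lemma controls the resulting factor $\|(-\mathrm{i}b+D)^{-1}\|^p$ in the growth estimate. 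These ingredients are essential and absent from your proposal.

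There is also a secondary imprecision in the exponent calibration. You propose $\log|h(\la)|\le C\|V\|_{L^p}^p\,(1+|\la|)^{A}/|\mathrm{Im}\,\la|^{B}$ with $B=p-1$ and only $A\le 2p-1$. If $A=2p-1$, the boundary exponent at $\mu=1$ is $\beta_1=A-2B=1$, and then Theorem~\ref{BGK} leaves a residual factor $|\mu-1|^{(\beta_1-1+\tau)_+}=|\mu-1|^{\tau}$ in the zero sum, which upon transferring back inserts an extra $(1+|\la|)^{-\tau}$ in the denominator and produces a bound strictly weaker than \eqref{eq03}. The value the argument actually needs is $A=p+d-1$, coming from the $|\la|^{d-1}$ in the resolvent integral (see \eqref{BR0}) together with the $|\la-\mathrm{i}b|^p$ from the modified determinant; then $\beta_1=d-p+1$ and $(\beta_1-1+\tau)_+=(d-p+\tau)_+=0$ \emph{precisely because} $\tau<p-d$. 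This is exactly where the hypothesis $\tau<\min\{p-d,1\}$ enters, so the sharper computation of $A$ is not optional.
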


Now, consider the perturbed Klein-Gordon operator
\[
K=K_m+V. \]
Using the computations done for the perturbed Dirac operator we obtain the following results.
\begin{theorem}[case $m>0$]\label{t3}
Let $K$ be the Klein-Gordon operator defined above and
$V\in L^p(\R^d; \Mcc_{l,l}(\C))$, $p>d$.
Then, for $0<\tau$ small enough, we have
\begin{equation}\label{eq030}
 \ds\sum_{\la \in \si_d(K)} \dfrac{d(\la,\si(K_m))^{p+\tau}}
{|\la-m|\, (1+|\la|)^{p+\max\{p/2,d\}+2\tau-1}}
\leq C \|V\|^p_{L^p},
\end{equation}
where the constant $C$ depends on $l, d, p$, and $\tau$.
\end{theorem}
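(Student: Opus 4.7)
The plan is to follow the same strategy as for Theorems~\ref{t1} and~\ref{t2}, adapted to the simpler spectrum of $K_m$, which is the single half-line $[m,+\infty[$. Three ingredients are needed: a Schatten-class estimate on $V(K_m-\la)^{-1}$ with explicit $\la$-dependence; a regularized perturbation determinant whose zeros detect $\si_d(K)$; and a conformal transplantation onto the unit disk followed by the Borichev--Golinskii--Kupin theorem.

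The first step is to show that for $V\in L^p(\R^d;\Mcc_{l,l}(\C))$ with $p>d$, one has $V(K_m-\la)^{-1}\in\Sg_p$ for each $\la\in\C\setminus[m,+\infty[$, with a quantitative bound of the form $\|V(K_m-\la)^{-1}\|^p_{\Sg_p}\leq C\|V\|^p_{L^p}\,G(\la)$. Since $(K_m-\la)^{-1}$ is the Fourier multiplier with symbol $(\sqrt{|\xi|^2+m^2}-\la)^{-1}\otimes\Id_{\C^l}$, this reduces, via a Birman--Solomyak estimate for $p\geq 2$ and a complex interpolation argument in the spirit of Proposition~\ref{det-RS} for $d<p<2$, to an explicit $L^p$-calculation for the multiplier. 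This is where the dichotomy $\max\{p/2,d\}$ will appear: for $p\geq 2d$ the Birman--Solomyak regime governs and produces a factor proportional to $(1+|\la|)^{p/2}$ in $G$, while for $d<p<2d$ the interpolation step loses an extra power controlled by $d$. The single factor $|\la-m|^{-1}$ in \eqref{eq030} encodes the fact that $K_m$ has only one spectral edge.

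With the Schatten bound in hand, I would form $h(\la)=\det_{\lceil p\rceil}(I+V(K_m-\la)^{-1})$, holomorphic on $\C\setminus[m,+\infty[$ with zero set (counted with algebraic multiplicity) equal to $\si_d(K)$. Combining the Weierstrass--Schatten inequality $|\det_{\lceil p\rceil}(I+A)|\leq\exp(\Gamma_p\|A\|^p_{\Sg_p})$ with the Schatten bound yields an upper estimate for $\log|h|$ governed by $G$. I would then transplant the problem via the conformal map $\phi(z)=m-\bigl((1+z)/(1-z)\bigr)^2$, which sends $-1\mapsto m$ and $1\mapsto\infty$. The composition $h\circ\phi$ is holomorphic on $\D$ and its zeros correspond bijectively to $\si_d(K)$ counted with multiplicity; applying the Borichev--Golinskii--Kupin theorem in the same form used in the proof of Theorem~\ref{t1} and pulling back by $\phi$ delivers \eqref{eq030}.

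The main obstacle is the first step: tracking the optimal $\la$-dependence in $\|V(K_m-\la)^{-1}\|_{\Sg_p}$ uniformly across both regimes. Once the sharp $G$ is identified, Steps 2 and 3 are routine reworkings of the Dirac computations, with the one-sided nature of $\si(K_m)$ replacing one of the boundary factors $|\la\pm m|$ by a constant. The resulting improvement of $(1+|\la|)^{p-1-\max\{p/2,d\}}$ over the weight in \eqref{eq01} is precisely what forbids a black-box reduction from Theorem~\ref{t1} to Theorem~\ref{t3} and explains the author's formulation ``using the computations done for the perturbed Dirac operator'' rather than ``using the result''.
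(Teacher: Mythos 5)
Your overall framework (Schatten estimate, regularized perturbation determinant, conformal transplantation, Borichev--Golinskii--Kupin) is the same one the paper uses, and your conformal map $\phi(z)=m-\bigl((1+z)/(1-z)\bigr)^2$ is indeed the correct one for the one-sided spectrum $[m,+\infty[$. But two of your mechanisms are wrong or incomplete, and one of them inverts where the actual work is.

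The dichotomy $\max\{p/2,d\}$ does \emph{not} come from the Schatten estimate. The Klein--Gordon analogue of Propositions~\ref{det-RS} and~\ref{det-BR} is identical in form to the Dirac case and gives, for all $p>d$ without any regime change,
\begin{equation*}
\|V(\la-K_m)^{-1}\|_{\Sg_p}^{p}\leq C\,\|V\|_{L^p}^{p}\,
\frac{1+|\la-m|^{d-1}}{d(\la,\si(K_m))^{p-1}}\,;
\end{equation*}
there is no Birman--Solomyak/interpolation switch at $p=2d$, and the factor is $(1+|\la|)^{d-1}$, not $(1+|\la|)^{p/2}$. The dichotomy appears only after transplantation: writing $1+|\la|\ap|1-z|^{-2}$, $|\la-m|\ap|1+z|^{2}/|1-z|^{2}$, and $d(\la,\si(K_m))\ap(1-|z|)\,|1+z|/|1-z|^{3}$, the log-determinant bound $(1+|\la|)^{p+d-1}/d(\la,\si(K_m))^{p-1}$ becomes $|1-z|^{\,p-2d-1}\,|1+z|^{-(p-1)}\,(1-|z|)^{-(p-1)}$ up to constants. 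Thus the exponent of $1/|u-u(\infty)|$ entering Theorem~\ref{BGK} is $(2d+1-p)_{+}$, and it is the clipping $(\beta-1+\tau)_{+}$ in BGK -- which vanishes for $p\ge 2d$ and equals $2d-p+\tau$ for $p<2d$ -- that produces $\max\{p/2,d\}$ in the pulled-back weight. Your ``main obstacle'' is therefore not step 1 (which is verbatim Proposition~\ref{det-RS}/\ref{det-BR}) but this conformal/BGK bookkeeping.

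The determinant $h(\la)=\det_{\lceil p\rceil}\bigl(I+V(K_m-\la)^{-1}\bigr)$ is not normalized, and Theorem~\ref{BGK} requires the composed function to equal $1$ at the origin of the disc. The paper's device (which you must reproduce) is to replace $h$ by $f(\la)=\det_{\lceil p\rceil}(\Id-F(\la))$ with $F(\la)=(\la-\mu)(\mu-K)^{-1}V(\la-K_m)^{-1}$, where $\mu$ is the preimage of $0$ under the normalized conformal map: then $F(\mu)=0$ gives $f(\mu)=1$ exactly, the zero set is unchanged, and the Schatten estimate only picks up the harmless factor $|\la-\mu|^{p}\,\|(\mu-K)^{-1}\|^{p}$, the second norm being $\leq 1$ by (the Klein--Gordon analogue of) Lemma~\ref{lem} once $\mu=\mathrm{i} b$ with $b$ large. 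Using the bare $h$ forces you to normalize by $h(\mu)$, which requires a lower bound on $|h(\mu)|$ that your sketch does not supply and which is not automatic.
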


We observe that, for $m=0$, the operator under consideration is
\[K=(-\Delta)^{1/2}+V.\]
We observe that a non-trivial degeneration of a bound on the resolvent of $K_0$ takes place in this case and the inequality of Theorem \ref{t3} can be refined in the following way:

\begin{theorem}[case $m=0$]\label{t4}
Let $K$ be the Klein-Gordon operator defined above with $m=0$ and
$V\in L^p(\R^d; \Mcc_{l,l}(\C))$, $p>d$. Then, for $0< \tau$ small enough, we have
\begin{equation}\label{eq040}
 \ds\sum_{\la \in \si_d(K)} \dfrac{d(\la,\si(K_0))^{p+\tau}}
{|\la|^{\min\{(p+\tau)/2,d\}}(1+|\la|)^{\frac{p}{2}+\max\{p,2d\}-d+2\tau}}
\leq C \|V\|^p_{L^p},
\end{equation}
where the constant $C$ depends on $l, d, p$, and $\tau$.
\end{theorem}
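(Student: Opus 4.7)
The plan is to follow the same scheme used for Theorems \ref{t2} and \ref{t3}, but with more careful accounting of the degeneration of the resolvent of $K_0=(-\Delta)^{1/2}$ at the spectral endpoint $\lambda=0$; this endpoint is the reason for the extra factor $|\lambda|^{\min\{(p+\tau)/2,d\}}$ in \eqref{eq040} that is absent in Theorem \ref{t3}.

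The first step is to establish a sharp Schatten--von Neumann estimate of the form
\begin{equation*}
\|V(\lambda-K_0)^{-1}\|_{\Sg_p}\leq C\,\|V\|_{L^p}\,F_0(\lambda),\qquad \lambda\in\C\setminus[0,+\infty[,
\end{equation*}
with an explicit function $F_0$. Since $(\lambda-K_0)^{-1}$ is the Fourier multiplier with symbol $g_\lambda(p)=(\lambda-|p|)^{-1}$, the Kato--Seiler--Simon inequality (for $p\geq 2$, in the same spirit as the proof of Proposition \ref{det-RS}) reduces the estimate to the spherical integral $c_d\int_0^{\infty}r^{d-1}|\lambda-r|^{-p}\,dr$. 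A case split over the regions $r\ll|\lambda|$, $r\sim|\lambda|$ and $r\gg|\lambda|$, combined with the further dichotomy $p\lessgtr 2d$, yields the singular factor $d(\lambda,\sigma(K_0))^{1/p-1}$ near the cut, the correct power of $|\lambda|$ at the origin, and polynomial growth at infinity governed by $\max\{p,2d\}$. For $p<2$, should it occur, one adapts the argument used for Theorem \ref{t1} in the case $d=1,\,1<p<2$; the negative real axis is treated either directly or by holomorphic extension from $\C\setminus\R$ via the factorization $(\lambda-K_0)^{-1}=(\lambda+K_0)(\lambda^2+\Delta)^{-1}$.

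With $F_0$ in hand, the regularized perturbation determinant $h(\lambda):=\det_{\lceil p\rceil}(I+V(\lambda-K_0)^{-1})$ is holomorphic on $\C\setminus[0,+\infty[$, normalized so that $h\to 1$ at infinity, and vanishes exactly on $\sigma_d(K)$ with the correct algebraic multiplicities; the bound $|\det_{\lceil p\rceil}(I+A)|\leq\exp(c\|A\|_{\Sg_p}^p)$ then yields $\log|h(\lambda)|\leq C\|V\|_{L^p}^p F_0(\lambda)^p$. I would next uniformize $\C\setminus[0,+\infty[$ by the conformal map $\psi\colon\D\to\C\setminus[0,+\infty[$, $\psi(z)=-\bigl(\tfrac{1+z}{1-z}\bigr)^2$, which sends $-1\mapsto 0$, $1\mapsto\infty$ and maps the remainder of $\partial\D$ two-to-one onto $(0,+\infty)$. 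A direct computation yields the asymptotic dictionary
\begin{equation*}
|1+z|\asymp\frac{\sqrt{|\lambda|}}{1+\sqrt{|\lambda|}},\quad |1-z|\asymp\frac{1}{1+\sqrt{|\lambda|}},\quad 1-|z|\asymp\frac{d(\lambda,\sigma(K_0))}{\sqrt{|\lambda|}\,(1+|\lambda|)},
\end{equation*}
with $z=\psi^{-1}(\lambda)$, which converts the bound on $\log|h|$ into one of the form $\log|h\circ\psi(z)|\leq K/[(1-|z|)^{\alpha_0}|1+z|^{\alpha_1}|1-z|^{\alpha_2}]$ with explicit exponents $\alpha_j$ dictated by $F_0^p$.

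The final step is to invoke the Borichev--Golinskii--Kupin/Hansmann-type theorem on the zero distribution of a holomorphic function in $\D$ with such boundary growth (the same complex-analytic lemma already used in the proofs of Theorems \ref{t2} and \ref{t3}), which produces a summability inequality of the form $\sum_k(1-|z_k|)^{1+\alpha_0+\tau}|1+z_k|^{\alpha_1-1}|1-z_k|^{\alpha_2-1}\leq C\|V\|_{L^p}^p$; pulling this back through the above dictionary yields exactly \eqref{eq040}. The main obstacle I anticipate is the bookkeeping in the first step: the interplay between the singularity $|\lambda-r|^{-p}$ and the volume element $r^{d-1}\,dr$ near $r=0$ is what forces the dichotomy $p\lessgtr 2d$ and produces the minimum $\min\{(p+\tau)/2,d\}$ in \eqref{eq040}; this case split must be propagated consistently through the conformal pull-back so that both the correct power of $|\lambda|$ and the correct power of $1+|\lambda|$ emerge.
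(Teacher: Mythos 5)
Your overall scheme (Schatten bound on the free resolvent, a perturbation determinant, uniformization of $\C\setminus[0,+\infty[$ by a conformal map, the Borichev--Golinskii--Kupin theorem, pull-back) is exactly the one the paper uses for Theorems \ref{t1} and \ref{t2} and implicitly invokes for Theorem \ref{t4}, and your conformal map $\psi(z)=-\bigl(\tfrac{1+z}{1-z}\bigr)^2$ together with the Koebe-type dictionary $|1\pm z|$, $1-|z|$ in terms of $|\la|$ and $d(\la,\sigma(K_0))$ is correct.

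The genuine gap is the normalization of the determinant. You set $h(\la)=\det_{\lceil p\rceil}(I+V(\la-K_0)^{-1})$ and remark that $h\to 1$ at infinity. But Theorem \ref{BGK} requires the function on $\D$ to equal $1$ at an \emph{interior} point, and with your $\psi$ the point $0\in\D$ goes to $\psi(0)=-1$, while $\infty$ is the image of the \emph{boundary} point $z=1$. There is no reason why $\det_{\lceil p\rceil}(I-V(-1-K_0)^{-1})$ should equal $1$, and it could even vanish if $-1\in\sigma_d(K)$. Renormalizing by dividing by $h(-1)$ would require a lower bound on $|h(-1)|$ controlled by $\|V\|^p_{L^p}$, which is not available in general. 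This is precisely why the paper does not use the raw determinant: it works with $f(\la)=\det_{\lceil p\rceil}(\Id-F(\la))$ for $F(\la)=(\la-\mathrm{i}b)(-\mathrm{i}b+K)^{-1}V(\la-K_0)^{-1}$ (with $b$ large enough so that $(-\mathrm{i}b+K)^{-1}$ exists and is a contraction, cf.\ Lemma \ref{lem}), which has the same zero set as your $h$ but vanishes \emph{identically} at $\la=\mathrm{i}b$, forcing $f(\mathrm{i}b)=1$; a M\"obius automorphism of the disc (the map $u$ of Section \ref{s3}) then moves $\psi^{-1}(\mathrm{i}b)$ to $0$. Without this device your application of Theorem \ref{BGK} is not justified. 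Note that the extra factor $|\la-\mathrm{i}b|^p\asymp(1+|\la|)^p$ this introduces is harmless and is in fact the source of part of the $(1+|\la|)$-power in \eqref{eq040}.

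Two smaller points: the determinant should be $\det_{\lceil p\rceil}(I-V(\la-K_0)^{-1})$ for $K=K_0+V$ (sign), and the dichotomy $p\lessgtr 2d$ does not appear in the Schatten estimate itself (which is uniformly $\lesssim |\la|^{d-1}/d(\la,\sigma(K_0))^{p-1}$, as in \eqref{BR0}); it arises only in the BGK pull-back, where the exponents of $|1+z|$ and $|1-z|$ change sign across $p=2d$ and thus are or are not included among the ``special'' boundary points.
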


For the self-adjoint case, an account on Lieb-Thirring inequalities
for perturbations of the so-called \emph{fractional Schr\"odinger operators} $(-\Delta)^s$
with power $0<s<\min\{1,d/2\}$, can be found in Frank, Lieb and Seiringer (\cite{FrLiSe})
and Lieb and Seiringer (\cite[chapter 4]{LiSe}).
It is convenient to compare Theorem \ref{t4} with these results for $s=1/2$.
Of course, $\si_d(K)$ lies on the negative real half-axis in this case.
In our notation Theorem 2.1 from \cite{FrLiSe} says
\begin{equation}\label{i:FrLiSe}
\sum_{\la\in \si_d(K)} |\la|^{p-d}\le C_{p,d} \|V_-\|^p_{L^p},
\end{equation}
where $p>d$ and $V_-=\min\{V, 0\}$.
Since $d(\la,\si(K_0))=|\la|$, bound \eqref{eq040} looks like
\begin{equation}\label{eq041}
 \ds\sum_{\la \in \si_d(K)} \dfrac{|\la|^{\max\{(p+\tau)/2,\ p+\tau-d\}}}
{(1+|\la|)^{p/2+\max\{p,2d\}-d+2\tau}}
\leq C \|V\|^p_{L^p},
\end{equation}
which is slightly weaker than \eqref{i:FrLiSe} as we will see immediately.
Indeed, if we have $p+\tau-d>(p+\tau)/2$ (or, equivalently, $(p+\tau)/2>d$), the left hand-side of \eqref{eq041} is
\begin{equation*}
\dfrac{|\la|^{p+\tau-d}}{(1+|\la|)^{p/2+\max\{p,2d\}-d+2\tau}}
\leq |\la|^{p-d}\cdot \left(\frac{|\la|}{1+|\la|}\right)^\tau \cdot \frac 1{(1+|\la|)^{\dots}}
\leq |\la|^{p-d}.
\end{equation*}
If $(p+\tau)/2<d$, we have for the left hand-side of \eqref{eq041}
\begin{align*}
\dfrac{|\la|^{(p+\tau)/2}}{(1+|\la|)^{p/2+d+2\tau}}
& \leq \frac{|\la|^{p+\tau-d}\cdot |\la|^{-(p+\tau)/2+d}}{(1+|\la|)^{p/2+d+2\tau}}\\
& =
|\la|^{p-d}\cdot \left(\frac{|\la|}{1+|\la|}\right)^\tau\cdot \frac{ |\la|^{-(p+\tau)/2+d}}{(1+|\la|)^{p/2+d+\tau}},
\end{align*}
and a simple bound using $d-(p+\tau)/2>0$ yields that the second and the third factor in the above formula are less or equal to one. It is clear that the remarks preceding Theorem \ref{t2} transposes verbatim to this situation.

The case of the Klein-Gordon operator also can be treated  with the help of a result 
(\cite[Cor. 1]{Ha2}).  In fact we find the following :
\begin{theorem}\label{t5}
Let $K$ be the Klein-Gordon operator defined above with $m\geq 0$ and $V \in
L^p(\R^d; \Mcc_{n,n}(\C))$, $p>d$.
Then its discrete spectrum $\si_d(K)$ admits the bound: 
\begin{equation}\label{KGbyH}
 \ds\sum_{\la \in \si_d(K)} \dfrac{d(\la,\si(K_m))^{p}}
{(1+|\la|)^{2p}}
\leq C \|V\|^p_{L^p},
\end{equation}
where the constant $C$ depends on $n, d, p$ and $m$.
\end{theorem}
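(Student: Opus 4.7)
The plan is to prove Theorem \ref{t5} by a direct invocation of the abstract bound \cite[Cor.~1]{Ha2} applied to the pair $(K_m, K)$. That corollary of Hansmann produces precisely an estimate of the form
\begin{equation*}
\sum_{\la \in \si_d(A+M)} \frac{d(\la, \si(A))^p}{(1+|\la|)^{2p}} \le C\, \|(\la_0-A-M)^{-1}-(\la_0-A)^{-1}\|_{\Sg_p}^p
\end{equation*}
for any self-adjoint operator $A$ whose spectrum is a closed half-line, any reference point $\la_0 \in \rho(A)$, and any perturbation $M$ such that the resolvent difference on the right belongs to $\Sg_p$. Our task thus reduces to verifying the Schatten hypothesis for $A=K_m$ and $M=V$, after which no further complex-analytic work is required.

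First, I would establish the Klein-Gordon analogue of Proposition \ref{det-RS}: for $V\in L^p(\R^d;\Mcc_{l,l}(\C))$ with $p>d$ and any $\la\in\C\setminus[m,+\infty[$,
\begin{equation*}
V(\la-K_m)^{-1} \in \Sg_p, \qquad \|V(\la-K_m)^{-1}\|_{\Sg_p} \le C(\la,d,l,m)\,\|V\|_{L^p}.
\end{equation*}
This is a standard Kato-Seiler-Simon estimate applied to the matrix-valued multiplication operator $V$ combined with the Fourier multiplier $(\la-\sqrt{|\xi|^2+m^2})^{-1}$, whose symbol is in $L^p(\R^d)$ precisely when $p>d$ (possibly after interpolation to cover the range $d<p<2$ along the lines of what was needed for $D_m$). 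Alternatively, the statement can be read off blockwise from the Dirac operator analogue in \eqref{e:equi}, with a trivial change of matrix size from $n/2$ to $l$. Applying the second resolvent identity then upgrades $V(\la_0-K_m)^{-1}\in\Sg_p$ to $(\la_0-K)^{-1}-(\la_0-K_m)^{-1}\in\Sg_p$, with a Schatten-norm bound of order $\|V\|_{L^p}$.

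Plugging this into \cite[Cor.~1]{Ha2} with $A=K_m$ (self-adjoint, $\si(A)=[m,+\infty[$) and $M=V$ immediately yields
\begin{equation*}
\sum_{\la\in\si_d(K)} \frac{d(\la,\si(K_m))^p}{(1+|\la|)^{2p}} \le C\,\|V\|_{L^p}^p,
\end{equation*}
which is \eqref{KGbyH}. The main point to check is simply that the geometric ingredients underlying \cite[Cor.~1]{Ha2} — the conformal map from $\C\setminus[m,+\infty[$ to $\D$ and the associated boundary distortion estimates — apply uniformly for every $m\ge 0$; the half-line geometry is the model case for which Hansmann's corollary is sharpest, which is precisely why Theorem \ref{t5} comes with a clean exponent $2p$ and does not require the auxiliary parameter $\tau>0$ that was unavoidable in Theorems \ref{t3}--\ref{t4}. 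The only genuine subtlety lies in controlling the constant $C$ as $m\to 0$: one has to choose the reference point $\la_0$ (e.g.\ $\la_0=-1$ or $\la_0=\mathrm{i}$) so that both the resolvent bound for $(\la_0-K_m)^{-1}$ and the constants coming from Hansmann's conformal change of variables remain uniformly controlled, thereby covering $m=0$ and $m>0$ in a single statement.
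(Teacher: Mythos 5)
Your proposal follows essentially the same route as the paper, which proves Theorem~\ref{t5} by invoking Corollary~1 of \cite{Ha2} on the resolvent pair $(\mu-K_m)^{-1}$, $(\mu-K)^{-1}$ for a suitable $\mu$, transferring via conformal maps, and bounding the Schatten norm of the resolvent difference by $\|V\|_{L^p}^p$ through the Klein--Gordon analogue of Proposition~\ref{det-RS}. One small remark: your worry about controlling the constant uniformly as $m\to 0$ is unnecessary, since the statement of Theorem~\ref{t5} explicitly allows $C$ to depend on $m$.
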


To prove Theorem \ref{t5} we apply Corollary 1 from \cite{Ha2} to the resolvents
$(\mu-K)^{-1}$ and $(\mu-K_m)^{-1}$ with $\mu$ appropriately chosen,
and use conformal maps and bounds of $\Sg_p$-norm to get the inequality.

One can see that relations \eqref{eq030} and \eqref{eq040} are in some sense stronger than \eqref{KGbyH} for $p>d$.

Before going to the discussion of obtained results, we say a couple
more words on the notation. Constants will be generic, \textit{i.e.}, changing
from one relation to another. Usually, they will be denoted by $C$ or
``const". For two strictly positive functions $f, g$ defined on a
domain $\Omega$ of the complex plane $\C$, we write $f(\la) \ap
g(\la)$ if the functions are comparable in the sense of the two-sided
inequality, \textit{i.e.}, there are constants $C_1,C_2>0$ so that $C_1
f(\la)\le g(\la)\le C_2 f(\la)$ for all $\la\in \Omega$. The choice of
the domain $\Omega$ will be clear from the context.

Theorems \ref{t1} and \ref{t2} provide quantitative estimates for the convergence of sequences of eigenvalues $(\la_n)\subset \si_d(D)$ to $\si_{\ess}(D)$ {for $V \in L^p(\R^d)$}. To illustrate, we fix $m>0$ and consider sequences 
$(\la_n)\subset \si_d(D)$ converging to a point $\la$ chosen in three different ways. Suppose that $\mathrm{Im} \la_n> 0$.
\begin{enumerate}
\item Let $\la=\pm m$ and assume there is a constant $C$ strictly positive such that
$|{\mathrm{Re}}(\la_n \mp m)|\le C\, |\mathrm{Im} \la_n|$. Then 
\begin{align*}
d(\la_n, \si(D_m)) \ap |\la_n \mp m|,
\quad |\la_n \pm m| \ap \mbox{ const},
\quad 1+|\la_n| \ap \mbox{ const},
\end{align*}
and relation \eqref{eq01} implies that
\[\ds\sum_{n=1}^{\infty}  |\la_n - m|^{p-1+\tau} < +\infty. \]

\item Let $\la=\infty$ and $|\mathrm{Im}(\la_n)|\leq C$.
Then
\begin{align*}
d(\la_n, \si(D_m)) \ap |{\mathrm{Im}}(\la_n)|,
\; |\la_n +m|.|\la_n -m| \ap |\la_n|^2,
\; 1+|\la_n| \ap |\la_n|,
\end{align*}
and relation \eqref{eq01} implies that
\[\ds\sum_{n=1}^{\infty} \dfrac{|{\mathrm{Im}}(\la_n)|^{p+\tau}}{|\la_n|^{2p+2\tau}} < +\infty. \]

\item If $\la\in ]m; \infty[$, then
\begin{align*}
d(\la_n, \si(D_m)) \ap |{\mathrm{Im}}(\la_n)|,
\; |\la_n +m|.|\la_n - m| \ap \mbox{ const},
\; 1+|\la_n| \ap \mbox{ const},
\end{align*}
and relation \eqref{eq01} implies that
\[\ds\sum_{n=1}^{\infty} |{\rm{Im}}(\la_n)|^{p+\tau} < +\infty. \]
\end{enumerate}

We conclude the introduction with few words on the structure of the paper.
The preliminary results are presented in section 2.
Section 3 contains the discussion of certain conformal maps appearing in the proofs.
Section 4 deals with a special perturbation determinant and corresponding bounds.
Theorems \ref{t1} and \ref{t2} are proved in sections \ref{s:proof} and \ref{massnull}, respectively. Since the proofs of Theorem \ref{t3} and \ref{t4} go exactly along the lines of Theorems \ref{t1} and \ref{t2}, it is omitted.

\medskip

\noindent\textbf{Acknowledgments}: I am grateful to Stanislas Kupin, 
Sylvain Gol\'enia and Vincent Bruneau for turning my attention to the problem and useful
discussions. I thank the anonymous referee for careful reading the manuscript and helpful remarks.
This research is partially supported by Franco-Ukrainian programm ``Dnipro 2013-14".


\section{Preliminaries}


\subsection{Schatten classes and determinants}\label{s21}
The contents of this subsection closely follow the monographs by Gohberg-Krein \cite{GoKr} and Simon \cite{Si1}.

For a separable Hilbert space $\Hc$, let $\Lc(\Hc)$ denote the space of bounded linear operators on $\Hc$.
We denote the class of compact operators on $\Hc$ by $\Sg_{\infty}$.
The Schatten-von Neumann classes $\Sg_p, 1\leq p < \infty,$ of compact operators are defined by
\[\Sg_p:=\{A \in \Sg_{\infty}, \|A\|_{\Sg_p}^p:=\ds\sum_{n=1}^{+\infty}s_n(A)^p<+\infty \},\]
where $s_n(A)$ is the $n$-th singular value of $A$.

For $A\in \Sg_n, n \in \N^*$, one can define the regularized determinant
\[ {\det}_n(\Id-A):=\ds\prod_{k=1}^{+\infty} \left[
(1-\la_k)\exp\left(\ds\sum_{j=1}^{n-1}\dfrac{\la_k^j}{j} \right) \right], 
\]
where $(\la_k)_k$ is the sequence of eigenvalues of $A$. 
This determinant has the following well-known properties (see \cite[Chap. IV]{GoKr} or \cite{Si1}):

\begin{enumerate}
\item $\det_n(\Id)=1$.

\item $\Id-A$ is invertible if and only if $\det_n(\Id-A)\neq 0$.

\item For any $A,B \in \Lc(\Hc)$ with $AB, BA \in \Sg_n, \det_n(\Id-AB)=\det_n(\Id-BA)$.

\item If $A(\cdot)$ is a holomorphic operator-valued function on a domain $\Omega$,
then the function $\det_n(\Id-A(\cdot))$ is also holomorphic on $\Omega$.

\item Let $A \in \Sg_p$ for some real $p\geq 1$.
Obviously, $A \in \Sg_{\lceil p \rceil}$,
where $\lceil p \rceil$ is defined by $\min\{n \in \N, n\geq p \}$, and the following inequality holds
\[ \label{i:det} |{\det}_{\lceil p \rceil}(\Id-A)|\leq
\exp\left(\Gamma_p\|A\|_{\Sg_p}^p\right),\]
where $\Gamma_p$ is a positive constant \cite[Theorem 9.2]{Si2}.
\end{enumerate}

For $A, B \in \Lc(\Hc)$ with $B-A \in \Sg_p$, we define the $\lceil p \rceil$-regularized perturbation determinant of $B$ with respect to $A$ by 
\[ d(\la):={\det}_{\lceil p \rceil}\left((\la-A)^{-1}(\la-B)\right)
={\det}_{\lceil p \rceil}(\Id-(\la-A)^{-1}(B-A)).
\]
This is a well defined holomorphic function on $\rho(A):=\C\backslash\si(A)$.

Furthermore, $\la \in \rho(A)$ is an eigenvalue of $B$ of multiplicity $k$ if and only if $\la$ is a zero of $\la\mapsto d(\la)$ of the same multiplicity.


\subsection{Theorem of Borichev-Golinskii-Kupin}
The following theorem, proved in \cite[Theorem 0.2]{BoGoKu}, gives a bound on the zeros of a holomorphic function on the unit disc $\D=\{|z|<1\}$ in terms of its growth towards the boundary $\T:=\{|z|=1\}$.
An important feature of this theorem is that it enables to take into account the existence of 'special' points $(\zeta_j)$ on the boundary of the unit disc,
where the function grows faster than at generic points.

\begin{theorem}\label{BGK}
Let $h$ be a holomorphic function on $\D$ with $h(0)=1$.
Assume that $h$ satisfies a bound of the form
\[ |h(z)|\leq\exp\left(
\dfrac{K}{(1-|z|)^{\al}} \ds\prod_{j=1}^N\dfrac{1}{|z-\zeta_j|^{\beta_j}}
\right),
\]
where $|\zeta_j|=1$ and $\al, \beta_j\geq 0, \ j=1,\dots, N$.

Then for any $\tau>0$ the zeros of $h$ satisfy the inequality
\[ \ds\sum_{h(z)=0}(1-|z|)^{\al+1+\tau}
\ds\prod_{j=1}^N|z-\zeta_j|^{(\beta_j-1+\tau)_+}\leq C\cdot K,
\]
where $C$ depends on $\al,\beta_j,\zeta_j$ and $\tau$.
\end{theorem}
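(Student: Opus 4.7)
The plan is to derive the inequality from a weighted average of Jensen's formula for $\log|h|$. Writing $u(z):=\log|h(z)|$, the hypothesis translates into the pointwise bound $u(z)\leq K\phi(z)$ with
$$\phi(z):=\frac{1}{(1-|z|)^{\al}\prod_{j=1}^N|z-\zeta_j|^{\beta_j}},$$
while $h(0)=1$ makes $u$ subharmonic on $\D$ with $u(0)=0$. By Jensen's formula, for any $r\in(0,1)$,
$$\sum_{|z_k|<r}\log\frac{r}{|z_k|}=\frac{1}{2\pi}\int_0^{2\pi}u(re^{i\theta})\,d\theta\leq\frac{K}{2\pi}\int_0^{2\pi}\phi(re^{i\theta})\,d\theta.$$

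The main step is to multiply this inequality by a carefully chosen weight $W(r,\theta)\geq 0$ and integrate over $(r,\theta)\in(0,1)\times(0,2\pi)$. On the right-hand side one needs $\int_\D \phi(z)\,W(z)\,dA(z)<+\infty$; a direct calculation dictates that $W(z)\ap (1-|z|)^{\tau-1}\prod_j|z-\zeta_j|^{(\beta_j-1+\tau)_+-\beta_j}$ is the correct shape (up to further small positive auxiliary exponents), with the role of $\tau>0$ being precisely to provide enough integrability against the $(1-|z|)^{-\al}$ and $|z-\zeta_j|^{-\beta_j}$ singularities. On the left-hand side, Fubini combined with the explicit computation
$$\int_0^1(1-r)^{\al-1+\tau}\log\tfrac{r}{|z_k|}\,dr\ap (1-|z_k|)^{\al+1+\tau},$$
together with analogous angular kernel estimates near each $\zeta_j$, produces the weighted sum $\sum_k(1-|z_k|)^{\al+1+\tau}\prod_j|z_k-\zeta_j|^{(\beta_j-1+\tau)_+}$. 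Putting the two sides together yields the claimed inequality with $C$ depending only on $\al,\beta_j,\zeta_j$ and $\tau$.

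The main obstacle is the angular kernel computation near the boundary points $\zeta_j$ and, in particular, the origin of the truncation $(\beta_j-1+\tau)_+$. When $\beta_j\geq 1$ the factor of $\phi$ is non-integrable in $\theta$, so $W$ must itself vanish at $\zeta_j$ at the rate $|z-\zeta_j|^{\beta_j-1+\tau}$ in order that the right-hand side remain finite, and this vanishing is what eventually produces the factor $|z_k-\zeta_j|^{\beta_j-1+\tau}$ on the left. When $\beta_j<1$ the corresponding factor of $\phi$ is already integrable, $W$ can stay regular at $\zeta_j$, and no extra weight appears — hence the positive part in the exponent. A further care is required to justify Fubini in the presence of the possibly negative contribution $\log(r/|z_k|)$ near $r=1$; this is handled by the standard truncation to $|z_k|<r<1$, where the integrand is non-negative.
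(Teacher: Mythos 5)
Your plan fails at a structural level: Jensen's formula, applied on circles $\{|z|=r\}$ centered at the origin, averages away all angular information about the zeros. After you write
\[
\sum_{|z_k|<r}\log\frac{r}{|z_k|}\ \le\ \frac{K}{2\pi}\int_0^{2\pi}\phi(re^{i\theta'})\,d\theta',
\]
the left-hand side depends only on the moduli $|z_k|$, with the integration variable on the right already bound. Multiplying by $W(r,\theta)$ and integrating over $(r,\theta)$ therefore only replaces $W$ on the left by its angular marginal $\tilde W(r)=\int_0^{2\pi}W(r,\theta)\,d\theta$. With your choice $W(z)\asymp(1-|z|)^{\alpha-1+\tau}\prod_j|z-\zeta_j|^{(\beta_j-1+\tau)_+}$ the exponents $(\beta_j-1+\tau)_+$ are nonnegative, so $\tilde W(r)\asymp(1-r)^{\alpha-1+\tau}$, and the left-hand side after Fubini collapses to
\[
\sum_k\int_{|z_k|}^1(1-r)^{\alpha-1+\tau}\log\frac{r}{|z_k|}\,dr\ \asymp\ \sum_k(1-|z_k|)^{\alpha+1+\tau},
\]
\emph{without} the factors $\prod_j|z_k-\zeta_j|^{(\beta_j-1+\tau)_+}$. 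The resulting inequality $\sum_k(1-|z_k|)^{\alpha+1+\tau}\le CK$ is in fact false when some $\beta_j$ is large: for $N=1$, $\zeta_1=1$, $\alpha=0$, $\beta_1$ large and $K$ small, the function $h(z)=\exp\bigl(c K/(1-z)^{\beta_1}-cK\bigr)$ has zeros with $1-|z_k|\asymp K^{1/\beta_1}$, so $(1-|z_k|)^{1+\tau}\asymp K^{(1+\tau)/\beta_1}\gg K$. It is precisely the angular factor $|z_k-1|^{(\beta_1-1+\tau)_+}$, which in this example is $\asymp K^{(\beta_1-1+\tau)/\beta_1}$, that compensates this; your scheme cannot generate it.

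To capture the angular location of the zeros you would need a pointwise, not circle-averaged, tool: the Poisson--Jensen formula evaluated along centers approaching each $\zeta_j$, or the Riesz representation $\log|h|=-\sum_k G(\cdot,z_k)+\text{(harmonic)}$ tested against your weight $W$ (which then requires controlling the harmonic part), or a local conformal change of variable near each $\zeta_j$ combined with a local Blaschke-type estimate. This is essentially what the proof in Borichev--Golinskii--Kupin does: it treats the neighborhoods of the exceptional boundary points $\zeta_j$ separately, using local/conformal arguments rather than a single global Jensen average. The radial computation $\int_{|z_k|}^1(1-r)^{\alpha-1+\tau}\log(r/|z_k|)\,dr\asymp(1-|z_k|)^{\alpha+1+\tau}$ is fine and does recover the classical case $N=0$, and your discussion of the integrability threshold explaining $(\beta_j-1+\tau)_+$ on the right-hand side is the correct heuristic; but the mechanism by which the same factors appear on the left is the missing, and nontrivial, part of the argument.
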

Above, $x_+=\max\{x,0\}$. An other useful version of the above result is given in Hansmann-Katriel (\cite[Theorem 4]{HaKa}).


\section{Conformal mappings}\label{s3}

Throughout this section $m$ is positive.
Recall that $\T=\{z: |z|=1\}$ is the unit circle and $\D=\{z: |z|<1\}$ the open unit disc.
The idea is to send the resolvent set of
$D_m,\, \rho(D_m)=\C\backslash\{]-\infty,-m]\cup[m,+\infty[\}$
on the unit disc $\D$ via a conformal map and to obtain a comparison between the distance to the spectrum of $D_m$ and the one to the unit circle: this kind of comparison is called distortion.
We note by $d(z,A):=\ds\inf_{w\in A}|z-w|$ the distance between $z$ and $A$.

The map we are interested in is constructed as a composition of four ``elementary" conformal maps which are as follows:

\begin{enumerate}
\item $z_1=\dfrac{\la-m}{\la+m}:\C\backslash \si(D_m) \to \C\backslash [0,+\infty[$.
The inverse mapping is given by $\la=m\dfrac{1+z_1}{1-z_1}$.

\item $z_2=\sqrt{z_1}:\C\backslash [0,+\infty[\to \{{\rm{Im}}(z)>0\}$.
The inverse mapping is $z_1=z_2^2$.

\item $z_3=\dfrac{z_2-\mathrm{i}}{z_2+\mathrm{i}}:\{{\rm{Im}}(z)>0\} \to \D$.
The inverse map is $z_2=\mathrm{i} \dfrac{1+z_3}{1-z_3}$.

\item The normalization is operated by
\[
u=e^{i\theta}\dfrac{z_3-z_b}{1-\overline{z_b}z_3}:\D_{z_3}\to \D_u,
\]
where $z_b={-\mathrm{i} b}/(|m+\mathrm{i} b|+m)$ is the image of $\mathrm{i} b$ by the three first conformal mappings. As above, we sometimes label the unit disk $\D$ by the corresponding variable to avoid misunderstanding. We put furthermore 
\[
u_{m,+}:=u(1), \quad u_{m,-}:=u(-1).
\]
The inverse map is $z_3=\dfrac{u+e^{\mathrm{i}\theta}z_b}{e^{\mathrm{i}\theta}+u\overline{z_b}}$.
\end{enumerate}

Notice that the conformal mapping $u$ will serve to match the normalization $h(0)=1$ from Theorem \ref{BGK}.
The following conformal maps
\begin{align}\label{def:phi}
\psi & = (z_3\circ z_2\circ z_1)^{-1}:\D_{z_3} \rightarrow \C\backslash\si(D_m),\\
\varphi & = (u\circ z_3\circ z_2\circ z_1)^{-1}:\D_u \rightarrow \C\backslash\si(D_m) \nonumber
\end{align}
will be important for the sequel.

The map $\psi$ is easy to compute,
\begin{equation}\label{psi}
\la=\psi(z_3)=-2m\, \dfrac{z_3}{1+z_3^2}.
\end{equation}
The following technical propositions are essentially application of Koebe distortion theorem \cite[Corollary 1.4]{Po} to the map $\psi$.

\begin{lemma}\label{i:u} With the above notation, we have
\begin{enumerate}
\item $d(u,\mathbb{T})\ap d(z_3,\mathbb{T})$.
\item $|z_3-a|\ap |u-u(a)|$, where $a \in \{ 1,-1,\mathrm{i},-\mathrm{i} \}$.
\end{enumerate}
\end{lemma}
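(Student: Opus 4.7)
The key observation is that the map
\[
u(z_3)=e^{\mathrm{i}\theta}\,\frac{z_3-z_b}{1-\overline{z_b}z_3}
\]
is a Möbius automorphism of the unit disc, with $|z_b|<1$ fixed (depending only on $m$ and $b$). Both statements then reduce to the standard computations that such automorphisms distort distances only by bounded factors.

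For (1), the plan is to apply the classical identity
\[
1-|u(z_3)|^2=\frac{(1-|z_b|^2)(1-|z_3|^2)}{|1-\overline{z_b}z_3|^2}.
\]
Since $|z_b|<1$ is fixed, and $z_3$ ranges over $\overline{\D}$, the denominator is bounded above by $4$ and below by $(1-|z_b|)^2>0$. Hence $1-|u|^2\ap 1-|z_3|^2$. Combined with $d(u,\T)=1-|u|$ and $1-|u|^2=(1-|u|)(1+|u|)\ap 1-|u|$ (and likewise for $z_3$), this yields $d(u,\T)\ap d(z_3,\T)$.

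For (2), I would perform the direct algebraic simplification
\[
u(z_3)-u(a)=e^{\mathrm{i}\theta}\frac{(z_3-z_b)(1-\overline{z_b}a)-(a-z_b)(1-\overline{z_b}z_3)}{(1-\overline{z_b}z_3)(1-\overline{z_b}a)}.
\]
Expanding the numerator collapses to $(z_3-a)(1-|z_b|^2)$, giving the clean formula
\[
|u(z_3)-u(a)|=\frac{(1-|z_b|^2)\,|z_3-a|}{|1-\overline{z_b}z_3|\cdot|1-\overline{z_b}a|}.
\]
Since $|a|=1$ and $z_3\in\overline{\D}$, both factors in the denominator are bounded from above by $1+|z_b|<2$ and from below by $1-|z_b|>0$; the numerator factor $1-|z_b|^2$ is a fixed positive constant. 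Hence $|u-u(a)|\ap |z_3-a|$ for each $a\in\{1,-1,\mathrm{i},-\mathrm{i}\}$.

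No real obstacle is expected here; the whole content is that a disc automorphism with center $z_b$ bounded away from $\T$ is bi-Lipschitz on the closed disc with constants depending only on $|z_b|$, and both assertions are specializations of this fact. The only point to watch is that the comparison constants in $\ap$ inherit the dependence on $m$ (through $|z_b|$), which is harmless for the later applications of Theorem \ref{BGK}.
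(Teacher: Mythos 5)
Your proof is correct, and it fills in exactly the computation the paper declares ``obvious and hence omitted'': both parts reduce to the two classical identities for the disc automorphism $\phi_{z_b}(z)=(z-z_b)/(1-\overline{z_b}z)$, namely $1-|\phi_{z_b}(z)|^2=(1-|z_b|^2)(1-|z|^2)/|1-\overline{z_b}z|^2$ and $\phi_{z_b}(z)-\phi_{z_b}(a)=(1-|z_b|^2)(z-a)/\bigl((1-\overline{z_b}z)(1-\overline{z_b}a)\bigr)$, together with the observation that $|z_b|<1$ is a fixed constant depending only on $m$ and $b$. Your closing remark that the implied constants depend on $m$ through $|z_b|$ is also accurate and consistent with the constant dependence recorded in Theorems~\ref{t1}--\ref{t2}.
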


The proof of the lemma is obvious and hence is omitted.

\begin{proposition}[From $\C\bsl \si(D_m)$ to $\D$]\label{CM:1}
We have
\[ d(\la,\si(D_m)) \ap \dfrac{|u-u_{m,+}|\cdot|u-u_{m,-}|}
{|u-u(\mathrm{i})|^2|u-u(-\mathrm{i})|^2}\, d(u,\mathbb{T}).\]
\end{proposition}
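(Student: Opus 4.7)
The plan is to apply the Koebe distortion theorem to the conformal map $\varphi : \D_u \to \C\setminus \si(D_m)$ defined in \eqref{def:phi} (which is legitimate since $\C\setminus ((-\infty,-m]\cup[m,+\infty))$ is simply connected). Koebe's distortion theorem (\cite[Corollary 1.4]{Po}) then yields
\begin{equation*}
d(\la,\si(D_m))=d(\varphi(u),\partial(\C\setminus\si(D_m)))\ap |\varphi'(u)|\,(1-|u|^2)\ap |\varphi'(u)|\cdot d(u,\T),
\end{equation*}
so the task reduces to estimating $|\varphi'(u)|$ by the rational expression on the right-hand side of the proposition.

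Since $\varphi = \psi\circ (u\mapsto z_3(u))$ with $z_3(u)=(u+e^{\mathrm{i}\theta}z_b)/(e^{\mathrm{i}\theta}+u\overline{z_b})$, the chain rule gives $\varphi'(u)=\psi'(z_3)\cdot z_3'(u)$. From \eqref{psi},
\begin{equation*}
\psi'(z_3)=-2m\,\dfrac{1-z_3^2}{(1+z_3^2)^2}=-2m\,\dfrac{(1-z_3)(1+z_3)}{(z_3-\mathrm{i})^2(z_3+\mathrm{i})^2},
\end{equation*}
so $|\psi'(z_3)| \ap |1-z_3|\,|1+z_3|/(|z_3-\mathrm{i}|^2|z_3+\mathrm{i}|^2)$, with constant depending on $m$.

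For the second factor, $u\mapsto z_3$ is a Möbius automorphism of $\D$ sending $0$ to $z_b$. A direct computation gives $|z_3'(u)|=(1-|z_b|^2)/|e^{\mathrm{i}\theta}+u\overline{z_b}|^2$, which is bounded above and below by constants depending only on $|z_b|<1$; hence $|z_3'(u)|\ap 1$. Thus
\begin{equation*}
|\varphi'(u)|\ap \dfrac{|1-z_3|\cdot|1+z_3|}{|z_3-\mathrm{i}|^2\,|z_3+\mathrm{i}|^2}.
\end{equation*}

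The last step is to convert everything from the $z_3$-variable to the $u$-variable using Lemma \ref{i:u}. Applying part (2) of that lemma to each of the four points $a\in\{1,-1,\mathrm{i},-\mathrm{i}\}$ gives $|1\mp z_3|\ap |u-u(\pm 1)|=|u-u_{m,\pm}|$ and $|z_3\mp \mathrm{i}|\ap |u-u(\pm\mathrm{i})|$, while part (1) gives $d(z_3,\T)\ap d(u,\T)$ (only needed implicitly, as we work directly with $d(u,\T)$ on the target side). Substituting these comparabilities into the estimate for $|\varphi'(u)|$ and combining with the Koebe estimate at the start yields the claim. The only point requiring some care — the main obstacle in the sense of bookkeeping rather than depth — is the simultaneous management of the two normalizations (the rotation factor $e^{\mathrm{i}\theta}$ and the base-point $z_b$) so that all implicit constants remain under control uniformly on $\D_u$; everything else is a routine chain-rule computation on top of the standard Koebe estimate.
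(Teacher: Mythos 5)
Your proof is correct and takes essentially the same route as the paper: both rely on the Koebe distortion theorem and the factorization $\psi'(z)=-2m(1-z^2)/(1+z^2)^2$, followed by Lemma~\ref{i:u} to pass to the $u$-variable. The only cosmetic difference is that you apply Koebe to $\varphi=\psi\circ z_3(u)$ on $\D_u$ and absorb the Möbius factor $|z_3'(u)|\ap 1$, whereas the paper applies Koebe directly to $\psi$ on $\D_{z_3}$ and then invokes both parts of Lemma~\ref{i:u}; these amount to the same thing since $u\mapsto z_3$ is an automorphism of the disc.
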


\begin{proof}
Since $\psi'(z)=-2m\dfrac{1-z^2}{(1+z^2)^2}$,
we obtain by Koebe distortion theorem
\begin{align}\label{K}
\dfrac{m}{2}\dfrac{|1-z_3^2|}{|1+z_3^2|^2}(1+|z_3|)d(z_3,\T)
&\leq  d(\la,\si(D_m))\\ 
&\leq  2m\dfrac{|1-z_3^2|}{|1+z_3^2|^2}(1+|z_3|)d(z_3,\T).\nonumber
\end{align}
That is,
\[ d(\la,\si(D_m))\ap \dfrac{|1-z_3^2|}{|1+z_3^2|^2}(1+|z_3|)d(z_3,\T).
\]
Now,
\[
|1-z_3^2|=|1-z_3|\cdot|1+z_3|,\,
|1+z_3^2|=|z_3-\mathrm{i}|\cdot|z_3+\mathrm{i}|,\,
1\leq 1+|z_3|\leq 2,
\]
and we use the previous lemma to conclude the proof.
\end{proof}

\begin{proposition}[From $\D$ to $\C\bsl\si(D_m)$]\label{CM:2}
The following relation holds true
\[ d(u,\T)\ap \dfrac{d(\la,\si(D_m))}
{(|\la+m|\cdot|\la-m|)^{\frac{1}{2}}(1+|\la|)}.
\]
\end{proposition}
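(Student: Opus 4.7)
The plan is to invert the distortion bound of Proposition \ref{CM:1}. In fact, the proof of that proposition actually establishes the sharper intermediate relation
\begin{equation*}
d(\la,\si(D_m)) \ap \frac{|1-z_3^2|}{|1+z_3^2|^2}\,(1+|z_3|)\, d(z_3,\T),
\end{equation*}
and using $1\le 1+|z_3|\le 2$ together with $d(z_3,\T)\ap d(u,\T)$ from Lemma \ref{i:u}(1), I can rearrange this as
\begin{equation*}
d(u,\T) \ap \frac{|1+z_3^2|^2}{|1-z_3^2|}\, d(\la,\si(D_m)).
\end{equation*}
So the whole task reduces to establishing the equivalence
\begin{equation*}
\frac{|1+z_3^2|^2}{|1-z_3^2|} \ap \frac{1}{(|\la+m|\cdot|\la-m|)^{1/2}(1+|\la|)}.
\end{equation*}

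Next I would express $1\pm z_3^2$ in terms of $z_2$ and $\la$. From $z_3=(z_2-\mathrm{i})/(z_2+\mathrm{i})$ one has $1-z_3=2\mathrm{i}/(z_2+\mathrm{i})$ and $1+z_3=2z_2/(z_2+\mathrm{i})$, hence $1-z_3^2 = 4\mathrm{i}\, z_2/(z_2+\mathrm{i})^2$. For the other factor, the identity $(z_2+\mathrm{i})^2+(z_2-\mathrm{i})^2=2(z_2^2-1)=2(z_1-1)$ combined with $z_1-1=-2m/(\la+m)$ yields $1+z_3^2 = -4m/[(\la+m)(z_2+\mathrm{i})^2]$. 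Taking moduli and plugging in $|z_2|=(|\la-m|/|\la+m|)^{1/2}$ produces
\begin{equation*}
\frac{|1+z_3^2|^2}{|1-z_3^2|} = \frac{4m^2}{|\la+m|^{3/2}|\la-m|^{1/2}\,|z_2+\mathrm{i}|^2}.
\end{equation*}

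The final step is the comparison $|z_2+\mathrm{i}|^2 \ap (1+|\la|)/|\la+m|$. Writing $z_2=x+\mathrm{i} y$ with $y>0$ (since $z_2$ sits in the upper half-plane), I have $|z_2+\mathrm{i}|^2 = |z_2|^2 + 2y + 1$, and the trivial inequality $2y \le y^2+1 \le |z_2|^2+1$ gives the sandwich $|z_2|^2+1 \le |z_2+\mathrm{i}|^2 \le 2(|z_2|^2+1)$. Substituting $|z_2|^2=|\la-m|/|\la+m|$ yields $|z_2+\mathrm{i}|^2 \ap (|\la-m|+|\la+m|)/|\la+m|$, and the triangle inequality $\max(2m,2|\la|) \le |\la-m|+|\la+m| \le 2(m+|\la|)$ finally produces $|\la-m|+|\la+m| \ap m+|\la| \ap 1+|\la|$ (with constants depending on the fixed parameter $m>0$). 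Chaining the three equivalences delivers the claim. I do not expect a real obstacle here: the argument is essentially bookkeeping through the explicit conformal maps constructed in Section \ref{s3}, the only mildly delicate point being the control of $|z_2+\mathrm{i}|^2$, which is neutralized by $\mathrm{Im}(z_2)>0$.
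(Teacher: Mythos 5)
Your proposal is correct and follows essentially the same route as the paper: invert the Koebe bound from Proposition~\ref{CM:1}, express $1\pm z_3^2$ through the explicit conformal maps (your $z_2$ is precisely the paper's $\sqrt{z_1}$, so the algebra is identical), and use $|\la-m|+|\la+m|\ap 1+|\la|$ for fixed $m>0$. The only place you add something is the elementary sandwich $|z_2|^2+1\le |z_2+\mathrm{i}|^2\le 2(|z_2|^2+1)$ valid for $\mathrm{Im}\,z_2>0$, which the paper simply asserts as $|\sqrt{z_1}+\mathrm{i}|^2\ap 1+|z_1|$.
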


\begin{proof}
From \eqref{K}, we have
\[ \dfrac{d(\la,\si(D_m))|1+z_3^2|^2}{2m|1-z_3^2|(1+|z_3|)}
\leq 1-|z_3| \leq
\dfrac{2d(\la,\si(D_m))|1+z_3^2|^2}{m|1-z_3^2|(1+|z_3|)}, \]
and
\[ d(z_3,\T) \ap
d(\la,\si(D_m))\dfrac{1}{1+|z_3|}\dfrac{|1+z_3^2|^2}{|1-z_3^2|} \]
since $1/2 \leq (1+|z_3|)^{-1}\leq 1 $. 

The definitions of the maps $z_i, i=1, 2, 3$ easily imply that
\[
1-z_3^2=\dfrac{4\mathrm{i}\sqrt{z_1}}{(\sqrt{z_1}+\mathrm{i})^2},\;
1+z_3^2=\dfrac{2z_1-2}{(\sqrt{z_1}+\mathrm{i})^2},\;
|\sqrt{z_1}+\mathrm{i}|^2 \ap 1+|z_1|,
\]
where ${\rm{Im}}(\sqrt{z_1})={\rm{Im}}(z_2)>0$. Furthermore,
\[|z_1-1|= \dfrac{2m}{|\la+m|},\;
|\sqrt{z_1}|=\left|\dfrac{\la-m}{\la+m}\right|^{\tfrac{1}{2}},\;
\dfrac{1}{1+|z_1|}=\dfrac{|\la+m|}{|\la+m|+|\la-m|}.
\]

Putting all this together, we obtain
\begin{align*}
d(z_3,\T) & \ap
d(\la,\si(D_m))\dfrac{|z_1-1|^2}{|\sqrt{z_1}|(1+|z_1|)}\\
& \ap \dfrac{d(\la,\si(D_m))}
{(|\la+m|\cdot|\la-m|)^{\tfrac{1}{2}}(|\la+m|+|\la-m|)}\\
& \ap \dfrac{d(\la,\si(D_m))}
{(|\la+m|\cdot|\la-m|)^{\tfrac{1}{2}}(1+|\la|)},
\end{align*} 
and Lemma \ref{i:u} finishes the proof.
\end{proof}


\section{Perturbation determinant}


\subsection{A special perturbation determinant}
This subsection closely follows \cite[Section 3.1.1]{DeHaKa};
the holomorphic function $f:\C\bsl\si(D_m)\rightarrow\C$
is defined by a relation similar to the formula preceding \cite[formula (22)]{DeHaKa}.
For the sake of completeness, we give a short list of analytic properties of this function $f$ relating it to the properties of the operator $D$;
more details on these connections (and proofs) are in the quoted section of \cite{DeHaKa}.

Let $b$ be large enough to guarantee that $(-\mathrm{i}b+D)$ is invertible (see Lemma \ref{lem}).
We require that $V\in L^p(\R^d,\Mcc_{n,n}(\C)), p>d$, and,
as we will see in section \ref{s42}, this condition implies that
$V(\la-D_m)^{-1}\in \Sg_p$ for $\la \in \rho(D_m)$.
We consider the operator
\begin{equation}\label{def:F}
F(\la):=(\la-\mathrm{i} b)(-\mathrm{i} b+D)^{-1}V(\la-D_m)^{-1},
\end{equation}
and the holomorphic function
\begin{equation}\label{def:f}
f(\la):={\det}_{\lceil p \rceil}(\Id-F(\la)).
\end{equation}

It is not difficult to see that:
\begin{enumerate}
\item 
The operator-valued function $F$ is well-defined and $F(\la)\in \Sg_p, \ p\ge 1$. Consequently, $f$ is well-defined and holomorphic on $\rho(D_m)$ as well.
\item 
Recording an alternative representation
\[
{\rm{Id}}-F(\la)=\left[{\rm{Id}}-(\la-\mathrm{i} b)(-\mathrm{i} b+D)^{-1} \right]
\left[{\rm{Id}}-(\la-\mathrm{i} b)(\mathrm{i} b+D_m)^{-1}\right]^{-1},
\]
we deduce that $\mathrm{Id} -F(\la)$ is not invertible if and only if $\la\in \si_d(D)$. Moreover, the multiplicity of the zero $\la_0$ of $f$ exactly coincides with the algebraic multiplicity of the eigenvalue $\la_0$ of the operator $D$, $\la_0\in \si_d(D)$.
\item
The above relation also yields that $F(\mathrm{i} b)=0$, and $f(\mathrm{i} b)=1$.
\end{enumerate}


\subsection{Schatten bounds on the operator $V(\la-D_m)^{-1}$}\label{s42} 
The choice of the Frobenius norm in definition \eqref{def:frob} is important for the next proposition.

\begin{proposition}\label{det-RS}
Let $V\in L^p(\R^d; \Mcc_{n,n} (\C)), \ p>d$, and $\la\in\rho(D_m)$.
Define $\mu_m:\R^d\to \C^n$ by
$\mu_m(x):=\sqrt{|x|^2+m^2}\times \mathrm{Id}_{\C^n}$.
Then 
$V(\la-D_m)^{-1}\in \Sg_p$, and
\[
\|V(\la-D_m)^{-1}\|_{\Sg_p}^p  \leq
(2\pi)^{-d}\|V\|_{L^p}^p\cdot\|(\la-\mu_m(\cdot))^{-1}\|_{L^p}^p
\]
if ${\rm{Re}}(\la)\geq 0$ and
\[
 \|V(\la-D_m)^{-1}\|_{\Sg_p}^p  \leq (2\pi)^{-d}
\|V\|_{L^p}^p\cdot\|(\la+\mu_m(\cdot))^{-1}\|_{L^p}^p
\]
if ${\rm{Re}}(\la)\leq 0$. 
\end{proposition}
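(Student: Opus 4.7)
The plan is to pass to the Fourier side and apply a matrix-valued Kato--Seiler--Simon inequality.

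First I compute the symbol of the resolvent. Conjugating by the Fourier transform, $D_m$ becomes multiplication by $\widehat{D_m}(\xi)=\alpha\cdot\xi+m\beta$. The Clifford anti-commutation relations give $\widehat{D_m}(\xi)^2=(|\xi|^2+m^2)\mathrm{Id}_{\C^n}=\mu_m(\xi)^2$, and hence
\[
(\la-\widehat{D_m}(\xi))^{-1} = \frac{\la+\widehat{D_m}(\xi)}{\la^2-\mu_m(\xi)^2}.
\]
Since $\widehat{D_m}(\xi)$ is self-adjoint with eigenvalues $\pm\mu_m(\xi)$, the operator norm of this matrix equals $\max\{|\la-\mu_m(\xi)|^{-1},\,|\la+\mu_m(\xi)|^{-1}\}$, which an elementary half-plane comparison identifies with $|\la-\mu_m(\xi)|^{-1}$ when $\mathrm{Re}(\la)\geq 0$ and with $|\la+\mu_m(\xi)|^{-1}$ when $\mathrm{Re}(\la)\leq 0$.

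Next, I write $V(\la-D_m)^{-1}=V(X)\, K(-\mathrm{i}\nabla)$ with matrix-valued multiplier $K(\xi)=(\la-\widehat{D_m}(\xi))^{-1}$, and appeal to the matrix-valued Kato--Seiler--Simon bound
\[
\|V(X)\,K(-\mathrm{i}\nabla)\|_{\Sg_p}^p \leq (2\pi)^{-d}\,\|V\|_{L^p}^p\,\bigl\|\,\|K(\cdot)\|_{\mathrm{op}}\,\bigr\|_{L^p(\R^d)}^p,
\]
established by complex interpolation between the Hilbert--Schmidt case $p=2$ and the trivial endpoint $p=\infty$. At $p=2$, the integral kernel of $V(X)\,K(-\mathrm{i}\nabla)$ is $V(x)\check K(x-y)$, and the submultiplicative inequality $\|V(x)\check K(x-y)\|_F\leq \|V(x)\|_F\,\|\check K(x-y)\|_{\mathrm{op}}$ combined with Fubini and Plancherel applied to $\check K$ in operator norm closes the computation with exactly the Frobenius norm of $V$. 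Substituting $\|K(\xi)\|_{\mathrm{op}}=|\la\mp\mu_m(\xi)|^{-1}$ from the first step yields the two asserted estimates.

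The main technical point is the matrix-valued version of KSS: the scalar statement is classical, but the careful pairing of norms (Frobenius on $V(x)$, coming from the definition of $L^p(\R^d;\Mcc_{n,n}(\C))$, operator norm on $K(\xi)$) must be respected throughout so that the $p=2$ computation produces the Frobenius norm appearing in $\|V\|_{L^p}$. The assumption $p>d$ enters only at the end, to ensure finiteness of $\|(\la\mp\mu_m(\cdot))^{-1}\|_{L^p(\R^d)}$; in the exceptional regime $d=1$, $1<p<2$ referenced in Theorem \ref{t1}, the endpoint behavior of KSS is more delicate and an additional Birman--Solomyak-type argument may be needed to extend the bound below $p=2$.
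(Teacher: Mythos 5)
Your approach is essentially the one in the paper — pass to the Fourier side and apply a matrix-valued Kato--Seiler--Simon bound, using the Hilbert--Schmidt case at $p=2$, the trivial bound at $p=\infty$, and complex interpolation in between. What you add, usefully, is the explicit computation of the resolvent symbol $(\la-\widehat{D_m}(\xi))^{-1}=(\la+\widehat{D_m}(\xi))/(\la^2-\mu_m(\xi)^2)$ and its eigenvalue structure $\{(\la-\mu_m(\xi))^{-1},(\la+\mu_m(\xi))^{-1}\}$. The paper in fact quietly replaces the genuine Fourier multiplier $K(\xi)=(\la-\alpha\cdot\xi-m\beta)^{-1}$ by the scalar-times-identity $g(\xi)=(\la-\mu_m(\xi))^{-1}\mathrm{Id}_{\C^n}$ without stating the comparison step, and your diagonalization of $\widehat{D_m}(\xi)$ is exactly what is needed to justify it.

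There is, however, a technical glitch in the way you close the $p=2$ endpoint. You invoke the submultiplicativity $\|V(x)\check K(x-y)\|_F\leq\|V(x)\|_F\,\|\check K(x-y)\|_{\mathrm{op}}$ and then apply ``Plancherel to $\check K$ in operator norm.'' Plancherel is an $L^2$-isometry with respect to the Hilbert--Schmidt (Frobenius) norm on matrix-valued functions, not the operator norm; in general $\int\|\check K(x)\|_{\mathrm{op}}^2\,dx$ and $\int\|K(\xi)\|_{\mathrm{op}}^2\,d\xi$ differ, and one can only compare them up to a dimensional factor via $\|\cdot\|_{\mathrm{op}}\leq\|\cdot\|_F\leq\sqrt n\,\|\cdot\|_{\mathrm{op}}$. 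The clean way to proceed (and what the paper does) is to keep the Frobenius norm on the multiplier side: use $\|V(x)\check K(x-y)\|_F\leq\|V(x)\|_F\,\|\check K(x-y)\|_F$, apply the genuine Plancherel identity, and then observe that your diagonalization gives
\[
\|K(\xi)\|_F^2=\tfrac n2\bigl(|\la-\mu_m(\xi)|^{-2}+|\la+\mu_m(\xi)|^{-2}\bigr)
\le n\,\max\bigl\{|\la-\mu_m(\xi)|^{-1},|\la+\mu_m(\xi)|^{-1}\bigr\}^2
=\|g(\xi)\|_F^2
\]
for the appropriate sign of $\rrm(\la)$, which is exactly the Frobenius norm of $(\la\mp\mu_m(\xi))^{-1}\mathrm{Id}_{\C^n}$ appearing on the right of the proposition. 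This is a bookkeeping fix, not a change of method, but your current statement of the matrix KSS with operator norm on the multiplier is not actually established by the argument you give.

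Your final caveat about the regime $d=1$, $1<p<2$ is well founded: the paper disposes of $1\le p\le 2$ with a one-line duality claim, whereas the direct Kato--Seiler--Simon inequality in the form $\|f(X)g(-\mathrm i\nabla)\|_{\Sg_p}\lesssim\|f\|_{L^p}\|g\|_{L^p}$ is genuinely problematic below $p=2$ and the usual route is a Birman--Solomyak-type argument. Flagging this is reasonable, but to present a complete proof in that range you would need to either supply the missing Birman--Solomyak estimate or make precise what ``duality'' is supposed to mean here (the adjoint trick merely exchanges the roles of $f$ and $g$ and does not by itself lower the admissible range of $p$).
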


\begin{proof}
For $x\in\R^d$, we set, to be brief,
\[
f(x)=V(x),\quad g(x)=(\la-\mu_m(x))^{-1}\times \mathrm{Id}_{\C^n}.
\]
We prove actually that
\begin{equation}\label{RS1}
\|f \cdot g(-\mathrm i \nabla)\|_{\Sg_p}^p  \leq
(2\pi)^{-d}\|f\|_{L^p}^p\cdot\|g\|_{L^p}^p
\end{equation}
for $1\leq p<\infty$.
The proof closely follows \cite[Theorem 4.1]{Si2},
the main modifications being the use of the Frobenius norm
and relation \eqref{RS2} for matrix-valued integral operators.
To stress the differences of the matrix-valued case as compared to the scalar one,
we give the argument in a somewhat more detailed form than the quoted theorem from \cite{Si2}.

All operators considered in this proposition act on $L^2(\R^d; \C^n)$.
 Let $B$ be a bounded operator given by
\[
(Bf)(x)=\int_{\R^d} K(x,y) f(y)\, dy,
\]
where the kernel $K(\cdot,\cdot)$ is a $\Mcc_{n,n}(\C)$-valued measurable function
and $f$ belongs to $L^2(\R^d; \C^n)$.
A familiar result from \cite{GoKr} or \cite{Si2} says that
\begin{equation}\label{RS2}
\|B\|^2_{\Sg_2}=\int_{\R^d}\int_{\R^d} \|K(x,y)\|^2_F \, dxdy.
\end{equation}
Now, denote by
$A:=f(x)g(-\mathrm{i} \nabla)$ the integral operator
associated to the kernel \[(2\pi)^{-d/2}f(x)\check{g}(x-y),\] 
where $\check{g}$ is the inverse Fourier transform of $g$.

Suppose that $f$ and $g$ are in $L^2(\R^d; \Mcc_{n,n} (\C))$.
Recalling \eqref{def:frob} and the fact that the norm is submultiplicative, we obtain that 
\begin{align*}
\|A\|^2_{\Sg_2}&=\|f(x)g(-\mathrm{i} \nabla)\|_{\Sg_2}^2=
(2\pi)^{d}\int_{\R^d}\int_{\R^d}
\|f(x)\check{g}(x-y)\|_F^2\, dxdy 
\\
&\leq (2\pi)^{-d}\int_{\R^d}\int_{\R^d}
\|f(x)\|^2_F \|\check{g}(x-y)\|_F^2\, dxdy 
\\
&\leq
(2\pi)^{-d}\|f\|_{L^2}^2\cdot\|g\|_{L^2}^2,
\end{align*}
where we used Fubini and Fourier-Parseval theorems.
So, the integral operator $A$ lies in $\Sg_2$ (\textit{i.e.}, it is Hilbert-Schmidt), and we have bound \eqref{RS1} for $p=2$.
In particular, $A$ is a compact operator.

Recall that $L^\infty$ is the space endowed with the norm
\[\|f\|_{L^\infty}:= \textrm{ess-sup}_{x\in \R^d} \|f(x)\|_{F}.\]
Let us take two test functions $\phi, \psi$ from $L^2(\R^d; \C^n)$ such that 
\[
\|\phi\|^2_{L^2}=\int_{\R^d}\|\phi(x)\|^2_2\, dx=\int_{\R^d}\left(\sum^n_{i=1} |\phi_i(x)|^2\right)\, dx\leq 1
\]
and $\|\psi\|^2_{L^2}\leq 1$. We are to prove that
\begin{equation}\label{RS3}
\|A\|=\|A\|_{\Sg_\infty}=\sup_{\phi,\psi}|(\phi, A\psi)|\leq \|f\|_\infty\cdot \|g\|_\infty,
\end{equation}
where $f,g\in L^2\cap L^\infty$. Indeed,
\begin{align*}
|(\phi, A\psi)|&=|(\phi, f (g \hat\psi)\check{})|=|(f^*\phi, (g \hat\psi)\check{}\, )|
\\
&\leq \|f^*\phi\|_{L^2}\,  \|(g \hat\psi)\check{}\, \|_{L^2}.
\end{align*}
Then
\[
\|f^*\phi\|^2_{L^2}=\int_{\R^d} \|f^*\phi\|^2_2\, dx\leq \int_{\R^d} \|f\|^2_F \|\phi\|^2_2\, dx\leq \|f\|^2_\infty \|\phi\|^2_{L^2},
\]
and, similarly,
\[
\| g\hat\psi\|^2_{L^2}\leq \|g\|^2_\infty \|\hat \psi\|^2_{L^2}=\|g\|^2_\infty \|\psi\|^2_{L^2}.
\]
Hence, \eqref{RS3} is proved for all $f, g$ in $L^2\cap L^\infty$. 
Then the standard complex interpolation argument yields
\[\|f(x)g(-\mathrm{i} \nabla)\|_{\Sg_p}^p\leq (2\pi)^{-d}\|f\|_{L^p}^p\cdot\|g\|_{L^p}^p,\]
for all $2\leq p < \infty$. The same result for indices $1\leq p\le 2$ follows by duality.
\end{proof}


\subsection{Bound on the resolvent}\label{s43}
In this subsection, we bound expressions
$\|(\la \pm\mu_m(\cdot))^{-1}\|_{L^p}$ appearing in Proposition \ref{det-RS}. 
\begin{proposition}\label{det-BR}
Let $\la=\la_0+ \mathrm{i} \la_1$ and $p>d$.
Then
\[
 \|(\la-\mu_m(\cdot))^{-1}\|_{L^p}^p \leq
 \dfrac{K_1}{d(\la,\si(D_m))^{p-1}}(1+|\la-m|^{d-1})
\]
 for $\la_0\geq 0,$ and 
\[
 \|(\la+\mu_m(\cdot))^{-1}\|_{L^p}^p \leq
 \dfrac{K_2}{d(\la,\si(D_m))^{p-1}}(1+|\la+m|^{d-1}),
\]
for $\la_0\leq 0.$ Above, the constants $K_1$ and $K_2$ depend on $n, d, p, m$.
\end{proposition}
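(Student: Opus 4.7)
The plan is to reduce the $L^p$-norm to a one-dimensional integral along the half-line $[m, +\infty[$ (for $\la_0 \geq 0$; the case $\la_0 \leq 0$ is symmetric, via the substitution $s \to -s$ reducing it to the other branch $]-\infty, -m]$ of $\si(D_m)$), and then split this integral into a near part and a far part and estimate each. Since $\mu_m(x) = \sqrt{|x|^2+m^2}\,\mathrm{Id}_{\C^n}$ is a radial scalar multiple of the identity, passing to spherical coordinates and substituting $s = \sqrt{r^2+m^2}$ gives
\begin{equation*}
\|(\la-\mu_m(\cdot))^{-1}\|_{L^p}^p = c_{n,d}\int_m^{+\infty} \frac{s\,(s^2-m^2)^{(d-2)/2}}{|\la-s|^p}\,ds.
\end{equation*}

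I would split this integral at $R := 2\max(|\la|, m)$. On the far region $[R, +\infty[$, the estimates $|\la-s| \geq s/2$ and $(s^2-m^2)^{(d-2)/2} \leq s^{d-2}$ bound the integrand by a constant multiple of $s^{d-1-p}$; the hypothesis $p > d$ then yields a tail of order $|\la|^{d-p}$, which is absorbed in the right-hand side of the proposition thanks to $d(\la, \si(D_m)) \leq |\la|+m$. On the near region $[m, R]$, for $d \geq 2$ one bounds the polynomial factor by $s(s^2-m^2)^{(d-2)/2} \leq Cs^{d-1} \leq C(1+|\la|^{d-1})$, reducing matters to the one-dimensional inequality
\begin{equation*}
\int_m^{+\infty} \frac{ds}{|\la-s|^p} \leq \frac{C}{d(\la, \si(D_m))^{p-1}}, \qquad \la_0 \geq 0,
\end{equation*}
which I would verify by treating separately $\la_0 \geq m$ (extending the domain of integration to $\R$ to obtain the standard bound $C/|\la_1|^{p-1}$, with $|\la_1| \ap d(\la,\si(D_m))$) and $0 \leq \la_0 < m$ (via the substitution $t = s-\la_0$ and a case distinction depending on whether $|\la_1|$ dominates $m-\la_0$, both cases recovering the required distance $|\la-m|$ or $|\la_1|$). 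Converting $(1+|\la|^{d-1})$ into $(1+|\la-m|^{d-1})$ through $|\la| \leq |\la-m|+m$ then assembles the full bound.

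The main obstacle is the case $d = 1$, where the substitution introduces the integrable singularity $(s^2-m^2)^{-1/2}$ at $s = m$ and the crude estimate of the polynomial factor above is no longer available. Near the edge I would instead split $[m, R]$ further at $s = m + d(\la,\si(D_m))$, exploiting the local integrability $\int_0^\delta u^{-1/2}\,du = 2\sqrt{\delta}$ on $[m, m+d(\la,\si(D_m))]$ while using $|\la-s| \geq (s-m)/2$ on its complement. A direct computation of this type tends to produce an exponent $p-1/2$ in the denominator rather than $p-1$; this discrepancy between the one-dimensional and higher-dimensional cases at the edge is precisely the analytic origin of the dichotomy in Theorem~\ref{t1} between \eqref{eq01} and \eqref{eq02}, and one should expect the proof of the proposition to treat the $d=1$ regime with slightly more care (or under an additional restriction on $p$) in order to match the stated bound.
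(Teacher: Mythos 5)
Your reduction for $d\geq 2$ is essentially the paper's argument: pass to a radial one-dimensional integral and estimate by splitting and reducing to $\int ds/|\la-s|^p$. The paper uses the shift $s=\sqrt{r^2+m^2}-m$ and separates the cases $\la_0\geq m$ (splitting at $\beta=\la_0-m$) and $0\leq\la_0<m$ (using $(s+m-\la_0)^2+\la_1^2\geq s^2+|\la-m|^2$), while you use $s=\sqrt{r^2+m^2}$ and split at $R=2\max(|\la|,m)$; both routes rest on the elementary bound $\int_m^\infty ds/|\la-s|^p\leq C\,d(\la,\si(D_m))^{1-p}$, so for $d\geq 2$ the two proofs are interchangeable.

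Your concern about $d=1$ is not just a worry to be patched over --- you have located a genuine error in the paper. The step $((s+m)^2-m^2)^{(d-2)/2}(s+m)\leq(s+m)^{d-1}$, which the paper uses silently, is valid only when $(d-2)/2\geq 0$, \emph{i.e.} $d\geq 2$; for $d=1$ the inequality reverses. And indeed the stated bound fails for $d=1$ near the edge: taking $\la=m+\mathrm{i}\epsilon$ and the substitution $r=(2m\epsilon)^{1/2}u$ shows that $\int_0^\infty dr/|\sqrt{r^2+m^2}-\la|^p\ap\epsilon^{1/2-p}$ as $\epsilon\to 0^+$, which is not $O(\epsilon^{1-p})$; so the exponent really should be $p-\tfrac12$, exactly as your sketch predicts. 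Where your explanation goes astray is in attributing this to the dichotomy between \eqref{eq01} and \eqref{eq02} in Theorem~\ref{t1}: that dichotomy comes from a later, independent step --- the sign of $p-2+\tau$ when the bound is transported from $\D$ back to $\C\bsl\si(D_m)$ after applying Theorem~\ref{BGK} --- and the paper applies Proposition~\ref{det-BR} uniformly in $d$, including (incorrectly) for $d=1$. So the proposition is not protected by the case distinction in Theorem~\ref{t1}; the $d=1$ defect carries through to that theorem's proof as well.
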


\begin{proof}
First of all, recall that
$(\la-\mu_m(\cdot))^{-1}=(\la-\mu_m(\cdot))^{-1}\times\mathrm{Id}_{\C^n}$, hence
$\|(\la-\mu_m(\cdot))^{-1}\|_{L^p(\R^d,\Mcc_{n,n}(\C))}^p=
n^{p/2} \|(\la-\mu_m(\cdot))^{-1}\|_{L^p(\R^d,\C)}^p$.

The cases $\pm \la_0\geq 0$ being similar, we give the proof for $\la_0 \geq 0$ only.  After a change of variable, we are reduced to bound
\[
I=\ds\int_0^{+\infty} \dfrac{r^{d-1}}{|\sqrt{r^2+m^2}-\la|^p}\,dr.
\]
We write
$|\sqrt{r^2+m^2}-\la|^p=
\left((\sqrt{r^2+m^2}-\la_0)^2+\la_1^2\right)^{\tfrac{p}{2}}$
and make the change of variable $s=\sqrt{r^2+m^2}-m$.
Hence,
\begin{equation} I=\ds\int_0^{+\infty}
\dfrac{((s+m)^2-m^2)^{\tfrac{d-2}{2}}(s+m)}
{((s+m-\la_0)^2+\la_1^2)^{\tfrac{p}{2}}}\,ds. \label{e:int}
\end{equation}

We now distinguish the cases $m\leq \la_0$ \ and \ $0\leq \la_0 < m$.
For $m\leq \la_0$, observe that $d(\la,\si(D_m))=|\la_1|$. We let $\beta=\la_0-m \geq 0$ and use the inequality
$\sqrt{(s+m)^2-m^2}\leq s+m$, so
\begin{align*}I \leq
\ds\int_0^{+\infty}\dfrac{(s+m)^{d-1}}{((s-\beta)^2+\la_1^2)^{\tfrac{p}{2}}}\,ds.
\end{align*}
\noindent Since $m\leq \la_0$ and $\la \notin \si(D_m)$,
we have $|\la_1|>0$, and 
\begin{align}
\ds\int_0^{+\infty}\dfrac{(s+m)^{d-1}}{((s-\beta)^2+\la_1^2)^{\tfrac{p}{2}}}
& =  \dfrac{1}{|\la_1|^p} \ds\int_0^{\beta}
\dfrac{(s+m)^{d-1}}
{\left(\left(\dfrac{s-\beta}{\la_1}\right)^2+1\right)^{\tfrac{p}{2}}}\,ds \nonumber \\
& + 
\dfrac{1}{|\la_1|^p} \ds\int_{\beta}^{+\infty}
\dfrac{(s+m)^{d-1}}
{\left(\left(\dfrac{s-\beta}{\la_1}\right)^2+1\right)^{\tfrac{p}{2}}}\,\label{sum:int}  ds.
\end{align}
In the right hand-side of \eqref{sum:int}, we make the change of variable $t=\dfrac{\beta-s}{\la_1}$ in the first integral
and $t=\dfrac{s-\beta}{\la_1}$ in the second one.
Then we apply the inequality ${(a+b)^{d-1}\leq C_d(a^{d-1}+b^{d-1})}$ for $a,b\geq 0$.
This leads to the bounds
\begin{align*}
I & \leq  \dfrac{C_d}{|\la_1|^{p-1}}
\left(\ds\int_0^{\tfrac{\beta}{\la_1}}
\dfrac{(\beta-\la_1 t)^{d-1}dt}{(t^2+1)^{\tfrac{p}{2}}}
+\ds\int_0^{\tfrac{\beta}{\la_1}}
\dfrac{m^{d-1}dt}{(t^2+1)^{\tfrac{p}{2}}}\right.
\\
& \hspace*{15mm}
\left. + \ds\int_{0}^{+\infty}
\dfrac{(\beta+\la_1 t)^{d-1}dt}{(t^2+1)^{\tfrac{p}{2}}}
+\ds\int_{0}^{+\infty}
\dfrac{m^{d-1}dt}{(t^2+1)^{\tfrac{p}{2}}}
\right).
\end{align*}
Recalling $p>d$, we continue as 
\[
I\leq
\dfrac{C_d}{|\la_1|^{p-1}} \left(
2(\beta^{d-1}+m^{d-1})\ds\int_0^{+\infty}\dfrac{dt}{(t^2+1)^{\tfrac{p}{2}}}
+2|\la_1|^{d-1} \ds\int_0^{+\infty}\dfrac{t^{d-1}\,dt}{(t^2+1)^{\tfrac{p}{2}}}
\right).
\]
Using $(\beta^{d-1}+|\la_1|^{d-1}) \ap |\la-m|^{d-1}$, we get to
\begin{equation}
 I \leq \dfrac{1}{d(\la,\si(D_m))^{p-1}}
\left(K_1 |\la-m|^{d-1}+K_2\right)
\label{e:int1}
\end{equation}
for $m\leq \la_0$.

We now turn to the case $0 \leq \la_0<m$. We see $d(\la,\si(D_m))=|\la-m|$; going back to $\eqref{e:int}$, we use the inequality $(s+m-\la_0)^2+\la_1^2\geq s^2+|\la-m|^2$.
Hence 
\[
I \leq \dfrac{1}{|\la-m|^p}
\ds\int_0^{+\infty}\dfrac{(s+m)^{d-1}}{\left(\left(\dfrac{s}{|\la-m|}\right)^2+1 \right)^{\tfrac{p}{2}}}\,ds.
\]
Doing the change of variable $t=\dfrac{s}{|\la-m|}$ and bounding as in the first part of the computation, we come to
\begin{equation}
I \leq \dfrac{1}{d(\la,\si(D_m))^{p-1}}
\left(\tilde{K}_1 |\la-m|^{d-1}+\tilde{K}_2\right)
\label{e:int2}
\end{equation}
for $0\leq \la_0<m$.

The proposition is proved.
\end{proof}


\section{Proof of the main result}\label{s:proof}

Let us start with the following lemma.
\begin{lemma}\label{lem}
For $p>d$ and $b$ large enough, we have $\|(-\mathrm{i} b+D)^{-1}\|\leq1$.
\end{lemma}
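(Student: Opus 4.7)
The plan is to treat $-\mathrm{i}b + D$ as a resolvent perturbation of the self-adjoint operator $-\mathrm{i}b + D_m$. Since $\si(D_m)\subset\R$, the spectral theorem yields
\[
\|(-\mathrm{i}b+D_m)^{-1}\| = \frac{1}{d(\mathrm{i}b, \si(D_m))} = \frac{1}{\sqrt{m^2+b^2}} \leq \frac{1}{b}.
\]
On $\dom(D_m) = \Hr^1$ one has the elementary factorisation
\[
-\mathrm{i}b + D = \bigl[\Id + V(-\mathrm{i}b + D_m)^{-1}\bigr](-\mathrm{i}b + D_m),
\]
so once the bracket is invertible on $L^2$, the operator $-\mathrm{i}b + D$ will be a bijection from $\Hr^1$ onto $L^2$ with
\[
(-\mathrm{i}b+D)^{-1} = (-\mathrm{i}b+D_m)^{-1}\bigl[\Id + V(-\mathrm{i}b + D_m)^{-1}\bigr]^{-1}.
\]

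The decisive step is to show that $\|V(-\mathrm{i}b+D_m)^{-1}\|$ becomes arbitrarily small for large $b$. Applying Proposition \ref{det-RS} at $\la = \mathrm{i}b$ (note $\mathrm{Re}(\la)=0\ge 0$) gives
\[
\|V(-\mathrm{i}b + D_m)^{-1}\|_{\Sg_p}^p \leq (2\pi)^{-d}\|V\|_{L^p}^p\cdot \|(\mathrm{i}b - \mu_m(\cdot))^{-1}\|_{L^p}^p,
\]
and Proposition \ref{det-BR}, combined with the elementary equalities $d(\mathrm{i}b,\si(D_m)) = |\mathrm{i}b-m| = |\mathrm{i}b+m| = \sqrt{m^2+b^2}$, bounds the last factor by
\[
\frac{K_1}{(m^2+b^2)^{(p-1)/2}}\bigl(1 + (m^2+b^2)^{(d-1)/2}\bigr),
\]
which is comparable to $b^{d-p}$ for large $b$. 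Since $p>d$ this tends to $0$, so $\|V(-\mathrm{i}b+D_m)^{-1}\|_{\Sg_p}\to 0$, and the operator norm is dominated by it.

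Finally, choose $b$ large enough that $\|V(-\mathrm{i}b+D_m)^{-1}\|\le \tfrac{1}{2}$, so the Neumann series makes the bracket invertible with inverse of norm at most $2$, and simultaneously $2/b\le 1$. Then
\[
\|(-\mathrm{i}b+D)^{-1}\| \leq \|(-\mathrm{i}b+D_m)^{-1}\|\cdot \bigl\|[\Id + V(-\mathrm{i}b+D_m)^{-1}]^{-1}\bigr\|\leq \frac{2}{b}\leq 1,
\]
as claimed. There is no genuine obstacle once Propositions \ref{det-RS} and \ref{det-BR} are in hand: the argument is a standard Neumann-series perturbation of the self-adjoint resolvent estimate, the condition $p>d$ being exactly what is needed to beat the polynomial growth factor $(1+|\la \mp m|^{d-1})$ in Proposition \ref{det-BR} against the decay $d(\la,\si(D_m))^{-(p-1)}$ as $b\to\infty$. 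The only minor bookkeeping is that the two largeness conditions on $b$ can be achieved simultaneously.
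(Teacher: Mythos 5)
Your argument is correct and is essentially the same as the paper's: you bound $\|V(\mathrm{i}b-D_m)^{-1}\|$ by its $\Sg_p$-norm via Propositions~\ref{det-RS} and \ref{det-BR}, observe it tends to $0$ as $b\to\infty$ because $p>d$, and combine with $\|(-\mathrm{i}b+D_m)^{-1}\|=1/\sqrt{m^2+b^2}$. The paper phrases the final step through the resolvent identity $\|(-\mathrm{i}b+D)^{-1}\|\le\|(-\mathrm{i}b+D_m)^{-1}\|+\|(-\mathrm{i}b+D)^{-1}\|\,\|V(-\mathrm{i}b+D_m)^{-1}\|$ and rearranges, which is just your Neumann-series bound written differently.
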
 

\begin{proof}
First, notice that the inequality $\|V(\mathrm{i} b-D_m)^{-1}\| < 1$ yields that
the operator $(-\mathrm{i} b+D)$ is invertible.

Indeed, the inequality $\|V(\mathrm{i} b-D_m)^{-1}\| < 1$ implies that
$\mathrm{Id}-V(\mathrm{i} b-D_m)^{-1}$ is invertible,
and we have
\begin{align*}
\mathrm{Id}-V(\mathrm{i} b-D_m)^{-1} & =
(\mathrm{i} b-D_m)(\mathrm{i} b-D_m)^{-1}-V(\mathrm{i} b-D_m)^{-1}\\
& =(\mathrm{i} b-D_m-V)(\mathrm{i} b-D_m)^{-1}\\
&= -(-\mathrm{i} b+D)(\mathrm{i} b-D_m)^{-1}.
\end{align*}

Second, we show that we have
$\|V(\mathrm{i} b-D_m)^{-1}\| < 1$ for $b$ large enough.
Since $\|A\| \leq \|A\|_{\Sg_p}$ for any operator $A$, 
Propositions \ref{det-RS} and \ref{det-BR} entail
\begin{align}
\|V(\mathrm{i} b-D_m)^{-1}\| &\leq \|V(\mathrm{i} b-D_m)^{-1}\|_{\Sg_p} \nonumber\\
& \leq K\|V\|_{L^p}\dfrac{(1+|\mathrm{i} b-m|^{d-1})}{|\mathrm{i} b-m|^{p-1}}, \label{eq06}
\end{align}
where the constant $K$ does not depend on $b$.
It is convenient to put
\[
C(b)=K\|V\|_{L^p}\dfrac{(1+|\mathrm{i} b-m|^{d-1})}{|\mathrm{i} b-m|^{p-1}}.
\]
The right-hand side of inequality \eqref{eq06} trivially goes to zero when $b$ goes to infinity, and so
$\|V(\mathrm{i} b-D_m)^{-1}\| \leq C(b) < 1$ for $b$ large enough.

Now we prove that $\|(-\mathrm{i} b+D)^{-1}\|\leq 1$ for $b$ large enough.
Using the resolvent identity, we get 
\begin{align*}
\|(-\mathrm{i} b+D)^{-1}\| & \leq \|(-\mathrm{i} b+D_m)^{-1}\|
+\|(-\mathrm{i} b+D)^{-1}\|\cdot\|V(-\mathrm{i} b+D_m)^{-1}\|\\
 & \leq \|(-\mathrm{i} b+D_m)^{-1}\|
+\|(-\mathrm{i} b+D)^{-1}\|\cdot\|V(-\mathrm{i} b+D_m)^{-1}\|_{\Sg_p}.
\end{align*}
Since $D_m^*=D_m$,
\[\|(-\mathrm{i} b+D_m)^{-1}\|=\dfrac{1}{d(\mathrm{i} b, \si(D_m))}
=\dfrac{1}{|\mathrm{i} b-m|},\]
and as above we obtain
\[
\|(-\mathrm{i} b+D)^{-1}\| \leq \dfrac{1}{|\mathrm{i} b-m|}
+C(b)\, \|(-\mathrm{i} b+D)^{-1}\|.
\]
Resolving this inequality with respect to $\|(-\mathrm{i} b+D)^{-1}\|$,
we get the claim of the lemma.
\end{proof}

\noindent\textit{Proof of Theorem \ref{t1}}:
Recall from \eqref{def:f} that
$f(\la)={\det}_{\lceil p \rceil}(\Id-F(\la))$,
with
\[F(\la):=(\la-\mathrm{i} b)(-\mathrm{i} b+D)^{-1}V(\la-D_m)^{-1} \in \Sg_p.\]
We have by the property of the regularized determinant
\begin{align} |f(\la)|\leq \exp\left(\Gamma_p \|(\la-\mathrm{i} b)(D-\mathrm{i} b)^{-1}V(\la-D_m)^{-1}\|_{\Sg_p}^p\right).\label{i:f}
\end{align}
Applying Propositions \ref{det-RS} and \ref{det-BR} to \eqref{i:f}, we get to 
\begin{align*}
\log |f(\la)| \leq
K\|V\|_{L^p}^p\|(-\mathrm{i} b+D)^{-1}\|^p
\dfrac{|\la-\mathrm{i} b|^p(1+|\la-m|^{d-1})}{d(\la,\si(D_m))^{p-1}}.
\end{align*}
for ${\rm{Re}}(\la)\geq 0$.
Up to obvious changes, a similar expression is obtained for the case ${\mathrm{Re}}(\la)\leq 0$.

We continue as 
\[
|\la-\mathrm{i} b|\le C(1+|\la|), \quad (1+|\la-m|)\le C(1+|\la|),
\]
and the factor $\|(-\mathrm{i} b+D)^{-1}\|^p$ is bounded from above with the help of Lemma \ref{lem}.
So,
\begin{equation}\label{i:proof}
\log |f(\la)| \leq K\|V\|_{L^p}^p\, 
\dfrac{(1+|\la|)^{p+d-1}}{d(\la,\si(D_m))^{p-1}}.
\end{equation}

We now have to go in $\D$ in order to apply Theorem \ref{BGK}.
That is, recalling defini\-tions \eqref{def:phi}, we consider the function $g(u)=f\circ \varphi(u)$;
it is trivially holomorphic on $\D_u$.
The considerations of section \ref{s3} and relation \eqref{psi} entail
\[
1+|\la| \ap \dfrac{|1-z_3|^2+|1+z_3|^2}{|z_3-\mathrm{i}|\cdot|z_3+\mathrm{i}|}
 \ap \dfrac{1+|z_3|^2}{|z_3-\mathrm{i}|\cdot|z_3+\mathrm{i}|}.
\]
In particular, we have by Lemma \ref{i:u}
\begin{equation*}
1+|\la| \ap \dfrac{1}{|u-u(\mathrm{i})|\cdot|u-u(-\mathrm{i})|}.
\end{equation*}
By the previous relation, \eqref{i:proof}, and Proposition \ref{CM:1}, we obtain
\begin{equation*}
\log |g(u)| \leq K\cdot \|V\|_{L^p}^p\
\dfrac{|u-u(i)|^{p-d-1}|u-u(-i)|^{p-d-1}}
{d(u,\T)^{p-1}|u-u_{m,+}|^{p-1}|u-u_{m,-}|^{p-1}}
\end{equation*}

By assumptions of the theorem, we always have $p>d$.
Consider first the case $p-d\geq 1$,
or, equivalently, $p-d-1\geq 0$.
Obviously, the factors
$|u-u(i)|^{p-d-1}$ and $|u-u(-i)|^{p-d-1}$ are then bounded,
and applying Theorem \ref{BGK}, we find for $0<\tau <1$
\begin{equation}\label{e07}
\ds\sum_{g(u)=0}(1-|u|)^{p+\tau}|u-u_{m,+}|^{p-2+\tau}|u-u_{m,-}|^{p-2+\tau}
\leq C\cdot \|V\|_{L^p}^p,
\end{equation}
where $C$ depends on $n, d, p, m$ and $\tau$.

In the second case, we have $0<p-d< 1$ or $-1<p-d-1< 0$.
We use Theorem \ref{BGK} with $0<\tau< p-d$ and so 
\begin{equation}\label{e08}
\ds\sum_{g(u)=0}(1-|u|)^{p+\tau}|u-u_{m,+}|^{(p-2+\tau)_+}|u-u_{m,-}|^{(p-2+\tau)_+}
\leq C_1\cdot \|V\|_{L^p}^p,
\end{equation}
where $C_1$ depends on $n, d, p, m$ and $\tau$.

The last step of the proof consists in transferring relations \eqref{e07}, \eqref{e08} back to $\rho(D_m)=\C\bsl\si(D_m)$.
Remind that we have by Lemma \ref{i:u} and Proposition \ref{CM:2}
\begin{align*}
1-|u|=d(u,\T) & \ap \dfrac{d(\la,\si(D_m))}{(|\la+m|\cdot|\la-m|)^{1/2}(1+|\la|)},\\
|u-u_{m,+}|\cdot|u-u_{m,-}| & \ap \dfrac{\left(|\la-m|\cdot|\la+m|\right)^{1/2}}{1+|\la|}.
\end{align*}

Thus, if $p\geq 2$, we come to
\begin{align*}
(1-|u|)^{p+\tau}\left(|u-u_{m,+}|\cdot|u-u_{m,-}|\right)^{p-2+\tau}
 & \geq \\
& \dfrac{C\,d(\la,\si(D_m))^{p+\tau}}{|\la+m|\cdot|\la-m|(1+|\la|)^{2(p+\tau-1)}},
\end{align*}
and if $1<p<2$, for $\tau >0 $ small enough, we have
\[
(1-|u|)^{p+\tau} \geq 
\dfrac{C\,d(\la,\si(D_m))^{p+\tau}}{(|\la+m|\cdot|\la-m|)^{(p+\tau)/2}(1+|\la|)^{p+\tau}}.
\]
The claim of the theorem follows. \hfill $\Box$ 

\medskip

Of course, one can wonder what happens if we choose $\tau\geq p-d$ in the case of relation \eqref{e08}.
It is easy to see that Theorem \ref{BGK} still applies, but, rather expectedly, the inequality obtained in this way is weaker than \eqref{e08},
so we do not pursue this direction.


\section{The case of $m=0$}\label{massnull}
The method is the same but the spectrum of $D_0$ is the whole $\R$, $\si(D_0)=\R$.
The slight differences as compared to the case $m>0$ come from the study of the conformal mappings
and the computation of the Schatten norm of the resolvent $V(\la-D_0)^{-1}, \la\in \rho(D_0)$.
Since the techniques and computations are extremely similar (not to say almost identical) to the case of Theorem \ref{t1},
we give only a fast sketch of Theorem \ref{t2}.

As the conformal map concerns, notice that $\rho(D_0)=\C^+\cup\C^-$,
where $\C^\pm$ are the sets $\{\la \in \C: \pm\im(\la)>0\}$.
So we can compute the contributions of the discrete spectrum $\si_d(D)\cap \C^\pm$ to \eqref{eq03} and then add them up.
That is why, without loss of generality,
we discuss the case of $\la \in \si_d(D)\cap \C^+$,
and the case of $\si_d(D)\cap \C^-$ is treated similarly.
The conformal map $\varphi$ we are interested in, is particularly simple
\begin{align*}
\la & = \varphi(u)=\mathrm{i} b\,  \frac{1+u}{1-u}: \D_u \to {\C^+}_\la,\\
u & = \varphi^{-1}(\la)= \frac{\la-\mathrm{i}b}{\la+\mathrm{i}b}: {\C^+}_\la \to \D_u.
\end{align*}
For instance, the distortions become
\[
d(\la,\si(D_0)) \ap \dfrac{d(u, \T)}{|u-1|^2}, \quad
d(u, \T)  \ap \dfrac{d(\la,\si(D_0))}{(1+|\la|)^2}.
\]

Let, as before, $p>d$. For $\la\in\C^+$, the bound on the resolvent reads as
\[
\|V (\la-D_0)^{-1}\|^p_{\Sg_p}\leq C \|V\|^p_{L^p}\, \|(\la-\mu_0(x))^{-1}\|^p_{L^p},
\]
where $\mu_0(x)=|x|$, and we need to bound the integral
\[I=\ds\int_0^{+\infty}\dfrac{r^{d-1}}{|r-\la|^p}\, dr.\]
Similarly to the computation of section \ref{s43}, we get
\begin{equation} \label{BR0}
I \leq \dfrac{K}{d(\la,\si(D_0))^{p-1}}\cdot |\la|^{d-1}
\end{equation}
and then
\begin{equation*}
\|V (\la-D_0)^{-1}\|^p_{\Sg_p}\leq C \|V\|^p_{L^p}\,
\dfrac{|\la|^{d-1}}{d(\la,\si(D_0))^{p-1}}.
\end{equation*}

\noindent \textit{Sketch of the proof of Theorem \ref{t2}.}
By property of the perturbation determinant in $\Sg_p$, we have
\[ \log|f(\la)| \leq K \|V\|_{L^p}^p 
\dfrac{|\la-{\mathrm{i}}b|^p|\la|^{d-1}}{d(\la;\si(D_0))^{p-1}},
\]
where $f$ is defined in \eqref{def:f} and $F$ is the same as in \eqref{def:F} with $m=0$.
Writing $\la=\varphi(u)$ and $g=f\circ \varphi$, we see
\begin{align*}
\log |g(u)| & \leq K \|V\|_{L^p}^p 
\dfrac{|u|^p|1+u|^{d-1}}{|1-u|^{d+p-1}}\cdot
\dfrac{|1-u|^{2(p-1)}}{d(u,\T)^{p-1}}\\
 & \leq K \|V\|_{L^p}^p \dfrac{|u|^p|1+u|^{d-1}}{|1-u|^{d-p+1}d(u,\T)^{p-1}}.
\end{align*}
We apply Theorem \ref{BGK} to the function $g$ to obtain
\[
\ds\sum_{g(u)=0}d(u,\T)^{p+\tau}|u-1|^{(d-p+\tau)_+} \leq K \|V\|_{L^p}^p
\]
for $\tau>0$ small enough.
Using the properties of the maps $\varphi, \varphi^{-1}$ discussed at the beginning of this subsection,
we conclude the proof of the theorem. \hfill $\Box$


\begin{thebibliography}{xxxxxxxx}

\bibitem[Bi]{Bi} M.\ Birman: \emph{The spectrum of singular boundary problems}, Mat. Sb. (N.S.) {\bf 55 (97)} (1961), 125--174 (Russian); translated in AMS Trans. {\bf 53} (1966), 23--80.

\bibitem[Bha]{Bha} R.\ Bhatia: \emph{Matrix analysis}, Springer-Verlag, New York, 1997.

\bibitem[BoGoKu]{BoGoKu} A.\ Borichev, L.\ Golinskii, and S.\ Kupin:
\emph{A Blaschke-type condition and its application to complex Jacobi matrices},
Bull.\ Lond.\ Math.\ Soc.\ \textbf{41} (2009), 117--123.

\bibitem[BrOu]{BrOu} V.\ Bruneau and E.M.\ Ouhabaz: 
\emph{Lieb-Thirring estimates for non-self-adjoint Schr\"dinger operators},
J.\ Math.\ Phys.\ {\bf 49} (2008), no.\ 9, 093504, 10 pp. 

\bibitem[CaLeNo]{CaLeNo} C.\ Cancelier, P.\ L\'evy-Bruhl and J.\ Nourrigat:
\emph{Remarks on the Spectrum of Dirac Operators},
Acta Appl. Math. {\bf 45} (1996), no. 3, 349--364. 

\bibitem[CuLaTr]{CuLaTr} J.C.\ Cuenin, A.\ Laptev, and C.\ Tretter:
\emph{Eigenvalue estimates for non-selfadjoint Dirac operator on the
 real line}, arXiv:1207.6584. 


\bibitem[DeHaKa]{DeHaKa} M.\ Demuth, M.\ Hansmann, and G.\ Katriel:
\emph{On the discrete spectrum of non-selfadjoint operators},
J.\ Funct.\ Anal.\ {\bf 257} (2009), no.\ 9, 2742--2759.

\bibitem[DeHaKa1]{DeHaKa1} M.\ Demuth, M.\ Hansmann, and G. Katriel:
\emph{Lieb-Thirring type inequalities for Schr\"odinger operators with a complex-valued potential}, Int.\ Eq.\ Operator Theory {\bf 75} (2013), no.\ 1, 1--5.

\bibitem[DeHaKa2]{DeHaKa2} M.\ Demuth, M.\ Hansmann, and G. Katriel:
\emph{Eigenvalues of non-selfadjoint operators: a comparison of two approaches},
to appear in proceedings of the conference `Mathematical Physics, Spectral Theory and Stochastic Analysis", Goslar, 2011. 

\bibitem[EdEv]{EdEv} D.E.\ Edmunds, W.D.\ Evans: \emph{Spectral theory and differential operators}, Oxford Mathematical Monographs. The Clarendon Press, Oxford University Press, New York, 1987.

\bibitem[FrSi]{FrSi} R.\ Frank and B.\ Simon:
\emph{Critical Lieb-Thirring bounds in gaps and the generalized Nevai Conjecture for finite gap Jacobi matrices},
Duke Math. J. 157(3):461-493, 2011.

\bibitem[FrLaLiSe]{FrLaLiSe} R.\ Frank, A.\ Laptev, E.\ Lieb, and R.\ Seiringer: \emph{Lieb-Thirring inequalities for Schr\"odinger operators with complex-valued potentials}, Lett.\ Math.\ Phys.\ {\bf 77} (2006), no.\ 3, 309--316.

\bibitem[FrLiSe]{FrLiSe} R.\ Frank, E.\ Lieb and R.\ Seiringer: \emph{Hardy-Lieb-Thirring inequalities for fractional Schr\"dinger operators,} J. Amer. Math. Soc. {\bf 21} (2008), no. 4, 925--950.

\bibitem[GoKr]{GoKr} I.C.\ Gohberg and M.G.\ Krein:
\emph{Introduction to the Theory of Linear Nonselfadjoint Operators in Hilbert Space},
American Mathematical Society (Providence, R.I.), 1969.

\bibitem[GoKu]{GoKu} L.\ Golinskii and S.\ Kupin: \emph{Lieb-Thirring bounds for complex Jacobi matrices}, Lett.\ Math.\ Phys.\ {\bf 82} (2007), no.\ 1, 79--90.

\bibitem[GoKu1]{GoKu1} L.\ Golinskii and S.\ Kupin:
\emph{A Blaschke-type condition for analytic functions on finitely connected domains. Applications to complex perturbations of a finite-band selfadjoint operator}, J.\ Math.\ Anal.\ Appl.\ {\bf 389} (2012), no.\ 2, 705--712. 

\bibitem[GoKu2]{GoKu2} L.\ Golinskii and S.\ Kupin:
\emph{On discrete spectrum of complex perturbations of finite band Schr\"odinger operators}, Recent trends in analysis : proceedings of the conference in honor of Nikolai Nikolski (Bordeaux, 2011), Theta Foundation, Bucarest, Romania, 2013, pp. 113-122.

\bibitem[HaKa]{HaKa} M.\ Hansmann and G.\ Katriel: \emph{Inequalities for the eigenvalues of non-selfadjoint Jacobi operators}, Complex Anal.\ Oper.\ Theory {\bf 5} (2011), no. 1, 197--218.

\bibitem[Ha1]{Ha1} M.\ Hansmann:
\emph{An eigenvalue estimate and its application to non-selfadjoint Jacobi and Schr\"odinger operators}, Lett.\ Math.\ Phys.\ {\bf 98} (2011), no. 1, 79--95.

\bibitem[Ha2]{Ha2} M.\ Hansmann: 
\emph{ Variation of discrete spectra for non-selfadjoint perturbations of selfadjoint operators}, Int.\ Eq.\ Operator Theory 76, no. 2, 163-178 (2013).

\bibitem[LiSe]{LiSe} E.\ Lieb and R.\ Seiringer: \emph{The stability of matter in quantum mechanics}, Cambridge University Press, Cambridge, 2010.

\bibitem[Ob]{Ob} E.\ Obolashvili:
\emph{Partial differential Equations in Clifford Algebras},
Pitman Monographs and Surveys in Pure and Applied Mathematics, 96, Longman, Harlow, 1998.

\bibitem[Po]{Po} C.\ Pommerenke:
\emph{Boundary behaviour of conformal maps}, Springer-Verlag, Berlin, 1992.

\bibitem[ReSi1]{ReSi1} M.\ Reed and B.\ Simon:
\emph{Methods of modern mathematical physics: I Functional Analysis},
Academic Press, London, 1980.

\bibitem[ReSi4]{ReSi4} M.\ Reed and B.\ Simon:
\emph{Methods of modern mathematical physics: IV Analysis of Operators},
Academic Press, London, 1978.

\bibitem[Sa]{Sa} D.\ Sambou:
\emph{Lieb-Thirring type inequalities for non-selfadjoint perturbations of magnetic Schr\"odinger operators}, arXiv: 1301.5169v2 (2013, submitted).

\bibitem[Sch]{Sch} J.\ Schwinger: \emph{On the bound states for a given potential}, Proc. Nat. Acad. Sci. U.S.A. {\bf 47} (1961), 122--129.

\bibitem[Si1]{Si1} B.\ Simon:
\emph{Notes on infinite determinants of Hilbert space operators},
Advances in Math.\ \textbf{24 (3)} (1997), 244--273.

\bibitem[Si2]{Si2} B.\ Simon:
\emph{Trace ideals and their applications},
London Math.\ Soc.\ Lecture Note Series, 35,
Cambridge University Press, Cambridge-New York (1979).

\bibitem[Th]{Th}
B.\ Thaller: \emph{The {D}irac equation}, Texts and Monographs in Physics,
 Springer-Verlag, Berlin, 1992.
\end{thebibliography}
\end{document}